\newtheorem{theorem}{Theorem}[section]
\newtheorem{lemma}[theorem]{Lemma}
\newtheorem{proposition}[theorem]{Proposition}
\newtheorem{cor}[theorem]{Corollary}
\theoremstyle{definition}
\theoremstyle{remark}
\newtheorem{remark}[theorem]{\bf{Remark}}
\numberwithin{equation}{section}
\begin{document}

\title{Euclidean operator radius and numerical radius inequalities}

\author[S. Jana, P. Bhunia, K. Paul] {Suvendu Jana, Pintu Bhunia, Kallol Paul}

\address[Jana]{Department of Mathematics, Mahisadal Girls' College, Purba Medinipur 721628, West Bengal, India}
\email{janasuva8@gmail.com} 

\address[Bhunia]{Department of Mathematics, Indian Institute of Science, Bengaluru 560012, Karnataka, India}
\email{pintubhunia5206@gmail.com}

\address[Paul]{Department of Mathematics, Jadavpur University, Kolkata 700032, West Bengal, India}
\email{kalloldada@gmail.com}

\thanks{Dr. Pintu Bhunia would like to thank SERB, Govt. of India for the financial support in the form of National Post Doctoral Fellowship (N-PDF, File No. PDF/2022/000325) under the mentorship of Prof. Apoorva Khare}


\renewcommand{\subjclassname}{\textup{2020} Mathematics Subject Classification}\subjclass[]{Primary 47A12, Secondary 15A60, 47A30, 47A50}
\keywords{Euclidean operator radius, Numerical radius, Operator norm, Cartesian decomposition, Bounded linear operator}

\maketitle

\begin{abstract}
	Let $T$ be a bounded linear operator on a complex Hilbert space $\mathscr{H}.$ 
We obtain various lower and upper bounds for the numerical radius of $T$  by developing  the Euclidean operator radius bounds of a pair of operators, which are stronger than the existing ones. In particular, we develop an inequality that improves on the inequality
$$	w(T) \geq \frac12 {\|T\|}+\frac14  {\left|\|Re(T)\|-\frac12 \|T\| \right|} + \frac14 { \left| \|Im(T)\|-\frac12 \|T\| \right|}.$$
Various equality conditions of the existing numerical radius inequalities are also provided. Further, we study the numerical radius inequalities of $2\times 2$ off-diagonal operator matrices.
Applying the numerical radius bounds of operator matrices, we develop the upper bounds of $w(T)$ by using $t$-Aluthge transform. In particular, we improve the well known inequality 
 $$	w(T) \leq   \frac12 {\|T\|}+ \frac12{ w(\widetilde{T})}, $$
where  $\widetilde{T}=|T|^{1/2}U|T|^{1/2}$ is the Aluthge transform of $T$ and $T=U|T|$ is the polar decomposition of $T$.

\end{abstract}

\section{Introduction}
\noindent Let $ \mathbb{B}(\mathscr{H})$ denote the $C^*$-algebra of all bounded linear operators on a complex Hilbert $(\mathscr{H},\langle . , . \rangle)$.  For $T\in \mathbb{B}(\mathscr{H}),$ $T^*$ denotes the adjoint of $T$ and  $|T|=({T^*T})^{\frac{1}{2}}$. The Cartesian decomposition of $T$ is $T= Re (T) + i Im(T),$ where $Re(T)=\frac{1}{2}(T+T^*)$ and $Im(T)=\frac{1}{2 i}(T-T^*)$. 
For $0\leq t\leq 1,$ the $t$-Aluthge transform of $T\in \mathbb{B}(\mathscr{H})$ is defined as $\widetilde{T_t}=|T|^{t}U|T|^{1-t},$
where $T=U|T|$ is the polar decomposition of $T$ and $U$ is the partial isometry. In particular, for $t=\frac12$, let $\widetilde{T}=\widetilde{T_{\frac12}}=|T|^{1/2}U|T|^{1/2}$ be the Aluthge transform of $T.$
The numerical radius of $T$, denoted by $w(T)$, is defined as $w(T)=\sup \left \{|\langle Tx,x \rangle|: x\in \mathscr{H}, \|x\|=1 \right \}.$
 The numerical radius $ w(\cdot)$, defines a norm on $\mathbb{B}(\mathscr{H})$, which satisfies 
\begin{eqnarray}\label{eqv}
\frac{1}{2} \|T\|\leq w({T})\leq\|T\|.
\end{eqnarray}
For further information on the numerical radius and related inequalities improving  \eqref{eqv}, we refer to \cite{AK1,MIA20,Book1,PSMK, RIM21,BSM21,LAA2019,STU03,Moradi2020,Book2}.  
Based on the importance of the concept of numerical radius, various generalizations have been studied for the last few years. Such a generalization is the Euclidean operator radius, see \cite{P}.
For $B,C\in \mathbb{B}(\mathscr{H})$, the Euclidean operator radius of  $B$ and $C$, denoted by $w_e(B,C),$ is defined as $$w_e(B,C)= \sup \left \{ \sqrt{|\langle B x,x\rangle|^2+|\langle C x,x\rangle|^2} : x\in \mathscr{H}, \|x\|=1 \right \}.$$
The Euclidean operator radius $w_e(.,.)$, defines a norm on $\mathbb{B}^2(\mathscr{H}) (=\mathbb{B}(\mathscr{H}) \times \mathbb{B}(\mathscr{H}) ),$  which satisfies 
the inequality (see \cite{P})
 \begin{eqnarray}
\frac{{1}}{8}\|B^*B+C^*C\|\leq w_e^2(B,C)\leq\|B^*B+C^*C\|.
\label{eqn1}\end{eqnarray}
 Here  
the constants $\frac{{1}}{8}$ and $1$ are best possible.
 In \cite[Th. 1]{D}, Dragomir proved that 
\begin{eqnarray}\label{D06}
\frac12 w\left(B^2+C^2\right)	\leq w^2_e(B,C)
\end{eqnarray}
 and the constant $\frac12$ is best possible. See \cite{SPK,SMS, SAH21} for more  generalizations on the Euclidean operator radius and related results.

 \smallskip
 In \cite{SPK}, authors studied improvements of the inequalities  \eqref{eqn1} and \eqref{D06}. In this  article we continue the study in that direction.
We obtain lower and upper bounds for the Euclidean operator radius  of a pair of bounded linear operators $B$ and $C$, which improve on the earlier related bounds. From the Euclidean operator radius bounds we develop various lower and upper bounds for the numerical radius of a bounded linear operator $T$, which improve \eqref{eqv} and  the inequality $\frac14\|T^*T+TT^*\|\leq w^2(T)\leq \frac12\|T^*T+TT^*\|$, given in \cite{E}. We study equality conditions of the existing numerical radius inequalities of a bounded linear operator $T$. Further, we obtain numerical radius bounds for the $2\times 2$ off-diagonal operator matrices, which generalize and improve on the existing ones. Applying the numerical radius bounds of $2\times 2$ off-diagonal operator matrices, we obtain an upper bound for the numerical radius of a bounded linear operator $T$ by using $t$-Aluthge transform, which improves and generalizes the bound  $w(T) \leq  \frac12 \|T\|+ \frac12 w(\widetilde{T}),$ given in \cite{STU07}.

\section{Main Results}
We begin with  the following proposition that gives lower bounds for the Euclidean operator radius $w_e(B,C).$

\begin{proposition}\label{lemm1}
	Let  $B,C \in\mathbb{B}(\mathscr{H})$. Then the following inequalities hold:\\
	(i) $w_e(B,C) \geq  \max\{ w(B),w(C)\}.$\\
	(ii) $w_e(B,C) \geq  \frac{1}{\sqrt{2}}w \left(B+e^{i\theta}C\right)\,\,\textit{for all $\theta \in \mathbb{R}$}.$\\
	(iii) $w_e(B,C) \geq  \sqrt{ \frac12 w\left(B^2+e^{i\theta}C^2\right)+\frac12 \left| w^2(B)-w^2(C)\right| } $ \,\,\textit{for all $\theta \in \mathbb{R}$}.\\
	(iv) $w_e(B,C) \geq  \sqrt{\frac12 w\left(BC+CB\right)}.$
\end{proposition}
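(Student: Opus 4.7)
The plan is to treat the four inequalities in increasing order of depth: (ii)--(iv) all reduce, via elementary manipulations, to (i) together with the standard triangle and power inequalities for the numerical radius. Part (i) is immediate from $\sqrt{a^{2}+b^{2}}\ge\max\{a,b\}$: for any unit vector $x$, $\sqrt{|\langle Bx,x\rangle|^{2}+|\langle Cx,x\rangle|^{2}}\ge|\langle Bx,x\rangle|$ and the analogous bound with $C$ both hold, and taking the supremum over unit vectors yields $w_{e}(B,C)\ge\max\{w(B),w(C)\}$. For (ii), I would combine the convexity inequality $(a+b)^{2}\le 2(a^{2}+b^{2})$ with the triangle inequality for inner products: for each unit vector $x$ and each $\theta\in\mathbb{R}$,
\[
|\langle(B+e^{i\theta}C)x,x\rangle|^{2}\le\bigl(|\langle Bx,x\rangle|+|\langle Cx,x\rangle|\bigr)^{2}\le 2\bigl(|\langle Bx,x\rangle|^{2}+|\langle Cx,x\rangle|^{2}\bigr),
\]
and passing to the supremum gives $w^{2}(B+e^{i\theta}C)\le 2 w_{e}^{2}(B,C)$.

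For (iii), I would start from (i) together with the identity $\max\{a,b\}=\tfrac{1}{2}(a+b)+\tfrac{1}{2}|a-b|$, to obtain $w_{e}^{2}(B,C)\ge\tfrac{1}{2}(w^{2}(B)+w^{2}(C))+\tfrac{1}{2}|w^{2}(B)-w^{2}(C)|$. Then, by the triangle inequality for $w(\cdot)$ and the power inequality $w(T^{2})\le w(T)^{2}$, the bound $w(B^{2}+e^{i\theta}C^{2})\le w(B^{2})+w(C^{2})\le w^{2}(B)+w^{2}(C)$ holds, and substituting into the previous line delivers (iii). For (iv), I would apply (ii) at $\theta=0$ and $\theta=\pi$ to obtain $2w_{e}^{2}(B,C)\ge w^{2}(B+C)$ and $2w_{e}^{2}(B,C)\ge w^{2}(B-C)$. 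Adding these and once more invoking the power inequality and the triangle inequality,
\[
4w_{e}^{2}(B,C)\ge w\bigl((B+C)^{2}\bigr)+w\bigl((B-C)^{2}\bigr)\ge w\bigl((B+C)^{2}-(B-C)^{2}\bigr)=2w(BC+CB),
\]
which is exactly (iv).

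I do not expect any serious obstacle; every step uses either the definition of $w_{e}$ or a textbook property of $w(\cdot)$. The one point that requires a small trick is (iv), where one has to notice that invoking (ii) at the two opposite phases $\theta=0$ and $\theta=\pi$, and then exploiting the polarization-type identity $(B+C)^{2}-(B-C)^{2}=2(BC+CB)$, is exactly what isolates the anticommutator on the right-hand side. The analogous small trick in (iii) is recognising that the naive lower bound $\max\{w^{2}(B),w^{2}(C)\}$ supplied by (i) can be upgraded to the sharper quantity $\tfrac{1}{2}w(B^{2}+e^{i\theta}C^{2})+\tfrac{1}{2}|w^{2}(B)-w^{2}(C)|$ by feeding the power inequality into it.
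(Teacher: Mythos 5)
Your proposal is correct and follows essentially the same route as the paper's own proof: (i) directly from the definition, (ii) via the bound $(a+b)^{2}\le 2(a^{2}+b^{2})$ with the triangle inequality, (iii) from (i) together with the identity $\max\{a,b\}=\tfrac12(a+b)+\tfrac12|a-b|$, the power inequality and the triangle inequality, and (iv) by applying (ii) at $\theta=0$ and $\theta=\pi$ and exploiting $(B+C)^{2}-(B-C)^{2}=2(BC+CB)$. No gaps; every step matches the paper's argument up to reformulation.
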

	
	\begin{proof}
		(i) Follows trivially from the definition.\\
		(ii) We have
		\begin{eqnarray*}
			w_e(B,C) &=&\sup_{\|x\|=1} \sqrt{|\langle Bx,x\rangle|^2+|\langle Cx,x\rangle|^2}\\
			&\geq& \sup_{\|x\|=1} \sqrt{\frac12 \left( |\langle Bx,x\rangle|+|\langle Cx,x\rangle| \right)^2}\\
			&\geq& \sup_{\|x\|=1} \sqrt{\frac12 \left( |\langle Bx,x\rangle+ e^{i\theta}\langle Cx,x\rangle| \right)^2}\\
			&=&\frac{1}{\sqrt{2}}w \left(B+e^{i\theta}C\right).
			\end{eqnarray*}
	(iii) From (i), we have
	\begin{eqnarray*}
		w_e^2(B,C) &\geq& \max \left\{ w^2(B),w^2(C)\right\}\\
		&=& \frac12 (w^2(B)+w^2(C))+ \frac12 |w^2(B)-w^2(C)|\\
		&\geq& \frac12 \left(w(B^2)+w(C^2) \right)+ \frac12 |w^2(B)-w^2(C)|\\
		&\geq& \frac12 w(B^2+e^{i\theta}C^2) + \frac12 |w^2(B)-w^2(C)|.
	\end{eqnarray*}
		(iv) From (ii), we have $w_e(B,C)\geq \frac{1}{\sqrt{2}}w \left(B+C\right)$ and	$w_e(B,C)\geq \frac{1}{\sqrt{2}}w \left(B-C\right).$ Thus,
	\begin{eqnarray*}
		2w_e^2(B,C) &\geq& \frac{1}{{2}}w ^2 \left(B+C\right)+\frac{1}{{2}}w ^2 \left(B-C\right)\\
		&\geq & \frac{1}{{2}}w \left( (B+C )^2\right)+\frac{1}{{2}}w  \left( (B-C)^2\right)\\
		&\geq & \frac{1}{{2}}w \left( (B+C )^2- (B-C)^2\right)\\
		&=& w\left(BC+CB \right).
	\end{eqnarray*}

This completes the proof.	
	\end{proof}

Clearly, Proposition \ref{lemm1} (iii) generalizes and improves the inequality  $w_e(B,C)\geq \sqrt{\frac12 w\left(B^2+C^2\right)}$, proved in \cite[Th. 1]{D}. 
Now, by using Proposition \ref{lemm1} we prove the following theorem.

\begin{theorem}\label{th1}
	 Let  $B,C \in\mathbb{B}(\mathscr{H})$. Then 
\begin{eqnarray*}
\sqrt{\frac14 w(B^2+C^2)+ \frac14 \left(w^2(B)+w^2(C) \right)+ \frac12 \left| w^2(B)-w^2(C)\right|}	&\leq & w_{e}(B,C).
\end{eqnarray*}
\end{theorem}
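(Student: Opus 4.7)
The plan is to obtain the desired lower bound by averaging two lower bounds on $w_e^2(B,C)$ that are already available from Proposition \ref{lemm1}. Specifically, I would combine part (iii) (specialized to $\theta=0$) with the squared version of part (i).

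First, taking $\theta=0$ in Proposition \ref{lemm1}(iii) and squaring gives
\begin{equation*}
w_e^2(B,C) \geq \tfrac12\, w(B^2+C^2) + \tfrac12\,\bigl|w^2(B)-w^2(C)\bigr|.
\end{equation*}
Next, squaring Proposition \ref{lemm1}(i) yields $w_e^2(B,C) \geq \max\{w^2(B),w^2(C)\}$, which I would rewrite using the identity $\max\{a,b\}=\tfrac12(a+b)+\tfrac12|a-b|$ as
\begin{equation*}
w_e^2(B,C) \geq \tfrac12\bigl(w^2(B)+w^2(C)\bigr)+\tfrac12\bigl|w^2(B)-w^2(C)\bigr|.
\end{equation*}

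Finally, I would add these two inequalities and divide by $2$ to conclude
\begin{equation*}
w_e^2(B,C) \geq \tfrac14\, w(B^2+C^2) + \tfrac14\bigl(w^2(B)+w^2(C)\bigr)+\tfrac12\bigl|w^2(B)-w^2(C)\bigr|,
\end{equation*}
from which the stated bound follows by taking square roots.

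There is no real obstacle here; the only conceptual point is to notice that two independent lower bounds can be averaged and that the $\tfrac12|w^2(B)-w^2(C)|$ term appears in both, so it survives the averaging with coefficient $\tfrac12$ rather than being halved. All the substantive work has already been done in Proposition \ref{lemm1}, so the proof should be a short two-line deduction using (i) and (iii).
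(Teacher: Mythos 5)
Your proof is correct. Squaring Proposition \ref{lemm1}(iii) at $\theta=0$ gives $w_e^2(B,C)\geq\frac12 w(B^2+C^2)+\frac12\left|w^2(B)-w^2(C)\right|$; squaring Proposition \ref{lemm1}(i) and using $\max\{a,b\}=\frac12(a+b)+\frac12|a-b|$ gives $w_e^2(B,C)\geq\frac12\left(w^2(B)+w^2(C)\right)+\frac12\left|w^2(B)-w^2(C)\right|$; and averaging two valid lower bounds is legitimate, yielding exactly the claimed inequality. Your route uses the same two ingredients as the paper but combines them differently, and more simply. The paper never invokes the modulus term of (iii): it uses only the weaker Dragomir form $w_e^2(B,C)\geq\frac12 w(B^2+C^2)$ together with (i), sets $t_1=\max\{w^2(B),\frac12 w(B^2+C^2)\}$ and $t_2=\max\{w^2(C),\frac12 w(B^2+C^2)\}$, and works from $w_e^2(B,C)\geq\max\{t_1,t_2\}=\frac12(t_1+t_2)+\frac12|t_1-t_2|$; the term $\frac12\left|w^2(B)-w^2(C)\right|$ is then reconstructed from $\frac12|t_1-t_2|$ together with the gaps $m_1,m_2$, and the chain passes through the power inequality $w(B^2)\leq w^2(B)$ and subadditivity $w(B^2+C^2)\leq w(B^2)+w(C^2)$, which guarantee $\frac12 w(B^2+C^2)\leq\max\{w^2(B),w^2(C)\}$ and thereby validate the closing rearrangement (in effect a case analysis). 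Your averaging argument buys brevity and transparency --- as you observe, the modulus term appears in both inputs and so survives with full coefficient $\frac12$ --- whereas the paper's computation shows the refinement already follows from the weaker form of (iii), at the cost of the bookkeeping with $t_1,t_2,m_1,m_2$. Since both arguments rest only on Proposition \ref{lemm1}, your two-line deduction is a complete and valid replacement for the paper's proof.
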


\begin{proof}
	Take $t_{1}=\max \left\{ w^2(B),\frac12 w\left(B^2+C^2\right) \right\},$ $t_{2}=\max \left\{ w^2(C),\frac12 w\left(B^2+C^2\right) \right\},$ $m_1=\left| w^2(B)-\frac12 w\left(B^2+C^2\right)\right|$ and $m_2=\left| w^2(C)-\frac12 w\left(B^2+C^2\right)\right|.$ 
From the inequalities (i) and (iii) of Proposition \ref{lemm1}, we have
\begin{eqnarray*}
w_{e}^2(B,C)&\geq&\max \{t_1,t_2\}\\
&=&\frac12(t_1+t_2)+\frac12\big|t_1-t_2\big|\\
&=& \frac14 \left( w^2(B)+w^2(C)\right)+\frac14 w(B^2+C^2)+\frac14 (m_1+m_2)+\frac12 \big|t_1-t_2\big|\\
&\geq& \frac14\left( w(B^2)+w(C^2)\right)+\frac14 w(B^2+C^2)+\frac14 (m_1+m_2)+\frac12 \big|t_1-t_2\big|\\
&\geq& \frac14 w(B^2+C^2)+\frac14 w(B^2+C^2)+\frac14 (m_1+m_2)+\frac12 \big|t_1-t_2\big|\\
&=&\frac12 w(B^2+C^2)+\frac14 (m_1+m_2)+\frac12 \big|t_1-t_2\big|\\
&=&\frac14 w(B^2+C^2)+ \frac14 \left(w^2(B)+w^2(C) \right)+ \frac12 \left| w^2(B)-w^2(C)\right|,
\end{eqnarray*}
as desired.
\end{proof}

\begin{remark}
	(i) Clearly, the inequality obtained in Theorem \ref{th1}  is a refinement of  the inequality $ \sqrt{\frac12 w\left(B^2+C^2\right)} \leq w_e(B,C)$, given in \cite[Th. 1]{D}.\\
	(ii) If  $w_e(B,C)=\sqrt{\frac12 w\left(B^2+C^2\right)}$, then  from Theorem \ref{th1} it follows that $w(B)=w(C)=\sqrt{\frac12 w\left(B^2+C^2\right)}$. However, the converse, in general, may not hold. As for example, considering  a non-zero normal operator $B=C,$ we get $w(B)=w(C)=\sqrt{\frac12 w\left(B^2+C^2\right)}$, but $w_e(B,C)= \sqrt2 w(B) \neq w(B)=\sqrt{\frac12 w\left(B^2+C^2\right)}.$\\
	(iii) If 	$w_e(B,C)=\sqrt{\frac12 w\left(B^2+C^2\right)+ \frac12\left|w^2(B)-w^2(C)\right|}$
	then  from Theorem \ref{th1} it follows that $w\left(B^2+C^2\right)=w^2(B)+w^2(C)$ and $w_e(B,C)=\max\{w(B), w(C)  \}.$ The converse of the result is also valid.
\end{remark}

 As an immediate consequence of  Theorem \ref{th1} we have the following result.

\begin{cor}\label{eq2}
	Let $ B,C\in\mathbb{B}(\mathscr{H})$ be normal, then
\begin{eqnarray*}
 w_{e}(B,C) &\geq &
	\sqrt{\frac14 \|B^2+C^2\|+ \frac14 \left(\|B\|^2+ \|C\|^2 \right)+ \frac12 \left| \|B\|^2-\|C\|^2\right|}\\
&=&\sqrt{\frac{1}{2} \left\|B^2+C^2 \right\|+\frac14(p_1+p_2)+\frac12\big|s_1-s_2\big|},
\end{eqnarray*} 
where $s_{1}=\max \left\{ \|B\|^2,\frac12 \|B^2+C^2\| \right\},$ $s_{2}=\max \left\{ \|C\|^2,\frac12 \|B^2+C^2\| \right\},$ $p_1=\big| \|B\|^2-\frac12 \|B^2+C^2\|\big|$ and $p_2=\big| \|C\|^2-\frac12 \|B^2+C^2\|\big|.$ 
\end{cor}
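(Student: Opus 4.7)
The plan is to derive the corollary as a direct specialization of Theorem \ref{th1}, combined with the classical fact that $w(N)=\|N\|$ for any normal operator $N \in \mathbb{B}(\mathscr{H})$.

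First, I would invoke Theorem \ref{th1} for the pair $B, C$, which gives
\begin{equation*}
w_e^2(B,C) \geq \frac14 w(B^2+C^2) + \frac14\bigl(w^2(B)+w^2(C)\bigr) + \frac12\bigl|w^2(B)-w^2(C)\bigr|.
\end{equation*}
Since $B$ and $C$ are normal, $w(B)=\|B\|$, $w(C)=\|C\|$, and $w(B^2+C^2)=\|B^2+C^2\|$. Substituting and taking square roots yields the first displayed bound of the corollary.

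For the equivalent form on the second line, I would apply the elementary identity $\max\{x,y\}=\frac12(x+y)+\frac12|x-y|$ to each of $s_1$ and $s_2$; this rewrites them as $s_1=\frac12\|B\|^2+\frac14\|B^2+C^2\|+\frac12 p_1$ and $s_2=\frac12\|C\|^2+\frac14\|B^2+C^2\|+\frac12 p_2$. Combining these expressions with $|s_1-s_2|$ and simplifying shows that $\frac12\|B^2+C^2\|+\frac14(p_1+p_2)+\frac12|s_1-s_2|$ coincides with $\frac14\|B^2+C^2\|+\frac14(\|B\|^2+\|C\|^2)+\frac12|\|B\|^2-\|C\|^2|$, establishing the stated equality.

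The only real difficulty is the algebraic bookkeeping; a brief case split on the ordering of $\|B\|^2$, $\|C\|^2$, and $\frac12\|B^2+C^2\|$ suffices. Since the triangle-type inequality $\|B^2+C^2\|\leq\|B\|^2+\|C\|^2$ (which holds for normal $B, C$) forces $\max\{\|B\|^2,\|C\|^2\}\geq\frac12\|B^2+C^2\|$, only two essentially distinct orderings arise, and both yield the same simplification, so no subtle obstacle remains.
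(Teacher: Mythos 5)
Your reduction breaks at the substitution $w(B^2+C^2)=\|B^2+C^2\|$: normality of $B$ and $C$ does not make $B^2+C^2$ normal, and this identity can fail. Concretely, on $\mathbb{C}^2$ choose unitary (hence normal) $B$ and $C$ with $B^2=\begin{pmatrix}0&1\\1&0\end{pmatrix}$ and $C^2=\begin{pmatrix}0&1\\-1&0\end{pmatrix}$ (both targets are unitary, so the spectral theorem provides unitary square roots). Then $B^2+C^2=\begin{pmatrix}0&2\\0&0\end{pmatrix}$ is nilpotent, with $w(B^2+C^2)=1<2=\|B^2+C^2\|$. Moreover, in this example the quantity you would obtain from Theorem \ref{th1} after substituting $w(B)=\|B\|$, $w(C)=\|C\|$ is $\frac14\cdot 1+\frac14\cdot 2+0=\frac34$, strictly smaller than the corollary's right-hand side $\frac14\cdot 2+\frac14\cdot 2+0=1$; so the corollary is genuinely stronger than a term-by-term specialization of the theorem's \emph{statement}, and the deficit cannot be recovered from the remaining terms (using only $w\geq\frac12\|\cdot\|$ yields the weaker constant $\frac18\|B^2+C^2\|$).

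The corollary is nevertheless true, and the repair uses exactly the two facts you already invoke in your second half, deployed at the level of the \emph{proof} of Theorem \ref{th1} rather than its conclusion. From Proposition \ref{lemm1}(i) and normality, $w_e^2(B,C)\geq\max\{\|B\|^2,\|C\|^2\}=\frac12\left(\|B\|^2+\|C\|^2\right)+\frac12\left|\|B\|^2-\|C\|^2\right|$; now split $\frac12\left(\|B\|^2+\|C\|^2\right)$ into two quarters and apply $\|B\|^2+\|C\|^2=\|B^2\|+\|C^2\|\geq\|B^2+C^2\|$ (valid since $\|B^2\|=\|B\|^2$ for normal $B$) to one of them. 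This yields the first displayed inequality directly; the paper's ``immediate consequence'' amounts to rerunning the theorem's argument with the operator norm in place of $w$, which is legitimate for normal operators. Your verification of the second displayed equality --- rewriting $s_1,s_2$ via $\max\{x,y\}=\frac12(x+y)+\frac12|x-y|$, with the observation that $\|B^2+C^2\|\leq\|B\|^2+\|C\|^2$ forces $\max\{\|B\|^2,\|C\|^2\}\geq\frac12\|B^2+C^2\|$ and trims the case analysis --- is correct and is exactly the bookkeeping needed.
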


Corollary \ref{eq2} is better than the first inequality in (\ref{eqn1}) when $B$ and $C$ are self-adjoint operators.
From Corollary \ref{eq2} we obtain the following numerical radius bound of a bounded linear operator $T$.  

\begin{cor} \label{pcor}
	Let $T\in \mathbb{B}(\mathscr{H})$. Then 
	\begin{eqnarray*} 
		\sqrt{\frac{1}{8} \|T^*T+TT^*\|+\frac14\left( \|Re(T)\|^2+\|Im(T)\|^2 \right)+\frac12\left|\|Re(T)\|^2-\|Im(T)\|^2\right|}\leq w(T),
	\end{eqnarray*}

\end{cor}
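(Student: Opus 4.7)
The plan is to apply Corollary \ref{eq2} to the pair $B = Re(T)$ and $C = Im(T)$, which are self-adjoint and therefore normal, so the hypothesis is satisfied.

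First I would establish the identity $w(T) = w_{e}(Re(T), Im(T))$. This follows immediately from the Cartesian decomposition: for every unit vector $x$, the quantities $\langle Re(T)x, x\rangle$ and $\langle Im(T)x, x\rangle$ are real, and $\langle Tx, x\rangle = \langle Re(T)x, x\rangle + i\langle Im(T)x, x\rangle$, so
\begin{equation*}
|\langle Tx,x\rangle|^2 = \langle Re(T)x,x\rangle^2 + \langle Im(T)x,x\rangle^2,
\end{equation*}
and taking the supremum over unit vectors gives $w^2(T) = w_{e}^2(Re(T), Im(T))$.

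Next I would compute $Re(T)^2 + Im(T)^2$. Using $Re(T)=\frac12(T+T^*)$ and $Im(T)=\frac{1}{2i}(T-T^*)$, a direct expansion gives
\begin{equation*}
Re(T)^2 + Im(T)^2 = \tfrac14\bigl[(T+T^*)^2 - (T-T^*)^2\bigr] = \tfrac12\bigl(T^*T + TT^*\bigr),
\end{equation*}
so $\|Re(T)^2+Im(T)^2\| = \tfrac12 \|T^*T+TT^*\|$.

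Finally, substituting $B = Re(T)$ and $C = Im(T)$ into the first inequality of Corollary \ref{eq2} and squaring the identity $w(T) = w_{e}(Re(T), Im(T))$, the bound on $\|B^2+C^2\|$ from the previous step converts the term $\tfrac14\|B^2+C^2\|$ into $\tfrac18\|T^*T+TT^*\|$, and the remaining terms match directly. Taking square roots yields the claimed inequality. There is essentially no obstacle here: the result is a clean specialization of Corollary \ref{eq2}, and the only computation of substance is the well-known identity $Re(T)^2 + Im(T)^2 = \tfrac12(T^*T+TT^*)$.
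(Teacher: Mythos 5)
Your proposal is correct and follows exactly the paper's own route: specializing Corollary \ref{eq2} to $B=Re(T)$, $C=Im(T)$, using $w(T)=w_e(Re(T),Im(T))$ and the identity $Re(T)^2+Im(T)^2=\frac12(T^*T+TT^*)$. The only difference is that you spell out these two computations explicitly, whereas the paper leaves them implicit.
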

\begin{proof}
	  Considering $ B={Re}(T)$ and $ C=Im(T)$  in Corollary \ref{eq2} we obtain that 
	\begin{eqnarray*} 
		w(T)&\geq& \sqrt{\frac{1}{8} \|T^*T+TT^*\|+\frac14\left( \|Re(T)\|^2+\|Im(T)\|^2 \right)+\frac12\left|\|Re(T)\|^2-\|Im(T)\|^2\right|}\\
		&=& \sqrt{\frac{1}{4}  \|T^*T+TT^*\|+\frac14(\alpha+\beta)+\frac12\big|\gamma-\delta\big|},
		\end{eqnarray*} 
	where $\alpha=\big|\|Re(T)\|^2-\frac14\|T^*T+TT^*\|\big|,$ $\beta=\big|\|Im(T)\|^2-\frac14\|T^*T+TT^*\|\big|,$ $\gamma=\max \left\{\|Re(T)\|^2,\frac14 \|TT^*+T^*T\| \right\}$ and $\delta=\max \left\{ \|Im(T)\|^2,\frac14 \|TT^*+T^*T\| \right\}.$
\end{proof}

\begin{remark}
(i) Clearly, the bound obtained in Corollary \ref{pcor} is stronger than the bound obtained in \cite[Th. 2.9]{LAA21}, which is, 
 \begin{eqnarray} \label{T}
	\sqrt{\frac14\|T^*T+TT^*\|+\frac12 \big| \|Re(T)\|^2-\|Im(T)\|^2\big|} \leq w(T).
\end{eqnarray}

  (ii) From Corollary \ref{pcor} it follows that, if $\sqrt{\frac14\|T^*T+TT^*\|+\frac12 \big| \|Re(T)\|^2-\|Im(T)\|^2\big|} = w(T),$ then  $\frac12 \|T^*T+TT^*\|= \|Re(T)\|^2+\|Im(T)\|^2$ and $w(T)=\max\{\|Re(T)\|, \|Im(T)\|\}$. The converse also holds.
\end{remark}

Using Corollary \ref{eq2} we also obtain the following lower bound for the numerical radius.
\begin{cor}\label{pcor1}
	Let $T\in \mathbb{B}(\mathscr{H})$, then
	\begin{eqnarray*}
		\sqrt{\frac18\|T^*T+TT^*\|+\frac18\left(\|Re(T)+Im(T)\|^2+ \|Re(T)-Im(T)\|^2\right)+ \frac{\beta}4 }& \leq & w(T),
	\end{eqnarray*}
where $\beta=\big|\|Re(T)+Im(T)\|^2- \|Re(T)-Im(T)\|^2\big|.$

\end{cor}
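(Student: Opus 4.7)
The plan is to apply Corollary \ref{eq2} to a cleverly chosen self-adjoint pair $(B,C)$ built from the Cartesian decomposition of $T$. The shape of the conclusion (with $\|Re(T)\pm Im(T)\|^2$ appearing) strongly suggests the substitution $B = Re(T)+Im(T)$ and $C = Re(T)-Im(T)$, both of which are self-adjoint and hence normal, so Corollary \ref{eq2} applies.

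First I would verify the two key identities tying this pair back to $T$. On the one hand, expanding $(Re(T)+Im(T))^2+(Re(T)-Im(T))^2 = 2(Re(T)^2+Im(T)^2)$ and using $Re(T)^2+Im(T)^2 = \tfrac12(T^*T+TT^*)$ gives
\begin{equation*}
B^2+C^2 = T^*T+TT^*,
\end{equation*}
so $\|B^2+C^2\| = \|T^*T+TT^*\|$. On the other hand, since $Re(T),Im(T)$ are self-adjoint, $\langle Re(T)x,x\rangle$ and $\langle Im(T)x,x\rangle$ are real, and for any unit $x$
\begin{equation*}
|\langle Bx,x\rangle|^2+|\langle Cx,x\rangle|^2 = (a+b)^2+(a-b)^2 = 2(a^2+b^2) = 2|\langle Tx,x\rangle|^2,
\end{equation*}
with $a=\langle Re(T)x,x\rangle$ and $b=\langle Im(T)x,x\rangle$. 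Taking the supremum over unit vectors yields $w_e(B,C) = \sqrt{2}\,w(T)$.

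Next I would plug these two facts into Corollary \ref{eq2}, which gives
\begin{equation*}
2w^2(T) = w_e^2(B,C) \geq \tfrac14\|T^*T+TT^*\|+\tfrac14\bigl(\|B\|^2+\|C\|^2\bigr)+\tfrac12\bigl|\|B\|^2-\|C\|^2\bigr|.
\end{equation*}
Dividing by $2$ and substituting back $B=Re(T)+Im(T)$, $C=Re(T)-Im(T)$ produces exactly the claimed bound with the stated $\beta$.

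There is no real obstacle here: the whole argument hinges on the single observation that rotating the pair $(Re(T),Im(T))$ by $45^\circ$ into $(B,C) = (Re(T)+Im(T),Re(T)-Im(T))$ both preserves the sum of squares (hence $T^*T+TT^*$) and scales the Euclidean operator radius by exactly $\sqrt{2}$, which is what is needed to convert Corollary \ref{eq2} into a numerical radius inequality for $T$.
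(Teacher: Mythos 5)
Your proof is correct and is essentially the paper's own argument: the paper applies Corollary \ref{eq2} to $B=\frac{Re(T)+Im(T)}{\sqrt{2}}$, $C=\frac{Re(T)-Im(T)}{\sqrt{2}}$, absorbing the $\sqrt{2}$ into the substitution so that $w_e(B,C)=w(T)$, whereas you use the unnormalized pair and carry the factor through via $w_e(B,C)=\sqrt{2}\,w(T)$ --- a trivial rescaling of the same idea. Your verification of the identities $B^2+C^2=T^*T+TT^*$ and $w_e(B,C)=\sqrt{2}\,w(T)$ is, if anything, more explicit than the paper's.
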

\begin{proof}
Considering $ B=\frac{Re(T)+Im(T)}{\sqrt{2}}$ and $ C=\frac{Re(T)-Im(T)}{\sqrt{2}}$ in Corollary \ref{eq2}, we have
	\begin{eqnarray*}
		w(T)& \geq & \sqrt{\frac18\|T^*T+TT^*\|+\frac18\left(\|Re(T)+Im(T)\|^2+ \|Re(T)-Im(T)\|^2\right)+ \frac{\beta}4 }\\
		&=& \sqrt{\frac{1}{4} \|T^*T+TT^*\|+\frac14(\gamma+\delta)+\frac12\big|\xi-\eta\big|},
	\end{eqnarray*}
	where 
	\begin{eqnarray*}
		\gamma &= &\left|\frac{\|Re(T)+Im(T)\|^2}{2}-\frac14\|T^*T+TT^*\|\right|,\\
		\delta &= &\left|\frac{\|Re(T)-Im(T)\|^2}{2}-\frac14\|T^*T+TT^*\|\right|,\\
		\xi  &=  &\max \left\{\frac{\|Re(T)+Im(T)\|^2}{2},\frac14 \|TT^*+T^*T\| \right\},\\
		\eta&=&\max \left\{\frac{\|Re(T)-Im(T)\|^2}{2},\frac14 \|TT^*+T^*T\| \right\}.
	\end{eqnarray*}
	
\end{proof}

\begin{remark}
(i) In \cite[Th. 2.3]{psk1} authors developed the inequality 
\begin{eqnarray*}\label{0pp}
	\sqrt{\frac{1}{4}\|T^*T+TT^*\|+\frac{1}{4}\left| \|Re(T)+Im(T)\|^2-\|Re(T)-Im(T)\|^2\right|} \leq w(T).
\end{eqnarray*}
It is easy to conclude that the inequality obtained in 
 Corollary \ref{pcor1} is  a refinement of the above inequalty.\\
(ii) From the inequality developed in Corollary \ref{pcor1}, it follows that if 
$$\sqrt{\frac{1}{4}\|T^*T+TT^*\|+\frac{1}{4}\left| \|Re(T)+Im(T)\|^2-\|Re(T)-Im(T)\|^2\right|} = w(T)$$ then 
$\|T^*T+TT^*\|=\|Re(T)+Im(T)\|^2+ \|Re(T)-Im(T)\|^2$ and 
$$ w(T)=\max \left\{ \frac{\|Re(T)+Im(T)\|}{\sqrt{2}}, \frac{ \|Re(T)-Im(T)\|} {\sqrt{2}}   \right\} .$$
The converse also holds.
\end{remark}

Next, we obtain an upper bound for the Euclidean operator radius $w_e(B,C).$

\begin{theorem}\label{th3}
	If $ B,C\in\mathbb{B}(\mathscr{H})$  then  for all $t \in [0,1], $
	\begin{eqnarray*}
		&& w_{e}(B,C) \\
		&\leq& \left\| t^2 B^*B+(1-t)^2C^*C\right\|^\frac12+  \frac1{\sqrt{2}}\left\lbrace w^2((1-t)B+tC)+ w^2((1-t)B-tC)\right\rbrace^\frac12.
	\end{eqnarray*} 
	In particular, for $t=\frac12$
	\begin{eqnarray}\label{eq22}
		w_{e}(B,C)&\leq& \frac12 \|B^*B+C^*C\|^\frac12+\frac{1}{2\sqrt{2}} \left\lbrace w^2(B+C)+w^2(B-C)\right\rbrace^\frac12. 
	\end{eqnarray}
	\end{theorem}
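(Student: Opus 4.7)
The plan is to estimate $\sqrt{|\langle Bx,x\rangle|^{2}+|\langle Cx,x\rangle|^{2}}$ pointwise for a unit vector $x\in\mathscr{H}$ and then take the supremum. The guiding idea is an asymmetric splitting of the pair $(\langle Bx,x\rangle,\langle Cx,x\rangle)\in\mathbb{C}^{2}$: write
$$\langle Bx,x\rangle = t\langle Bx,x\rangle + (1-t)\langle Bx,x\rangle, \qquad \langle Cx,x\rangle = (1-t)\langle Cx,x\rangle + t\langle Cx,x\rangle,$$
so that this pair decomposes as $(t\langle Bx,x\rangle,\,(1-t)\langle Cx,x\rangle) + ((1-t)\langle Bx,x\rangle,\, t\langle Cx,x\rangle)$. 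Applying the triangle inequality for the Euclidean norm on $\mathbb{C}^{2}$ then reduces the proof to bounding the two summands separately.

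For the first summand I would apply the Cauchy--Schwarz bound $|\langle Ax,x\rangle|^{2}\leq\langle A^{*}Ax,x\rangle$ with $A=tB$ and with $A=(1-t)C$, then add, obtaining
$$t^{2}|\langle Bx,x\rangle|^{2}+(1-t)^{2}|\langle Cx,x\rangle|^{2} \leq \langle (t^{2}B^{*}B+(1-t)^{2}C^{*}C)x,x\rangle \leq \|t^{2}B^{*}B+(1-t)^{2}C^{*}C\|.$$
For the second summand, the complex parallelogram identity $|z|^{2}+|w|^{2}=\tfrac12(|z+w|^{2}+|z-w|^{2})$ applied to $z=(1-t)\langle Bx,x\rangle$ and $w=t\langle Cx,x\rangle$ rewrites
$$(1-t)^{2}|\langle Bx,x\rangle|^{2}+t^{2}|\langle Cx,x\rangle|^{2} = \tfrac12\bigl(|\langle((1-t)B+tC)x,x\rangle|^{2}+|\langle((1-t)B-tC)x,x\rangle|^{2}\bigr),$$
and taking the supremum over unit $x$ bounds this by $\tfrac12\bigl(w^{2}((1-t)B+tC)+w^{2}((1-t)B-tC)\bigr)$. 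Taking square roots, adding the two estimates, and finally taking the supremum over unit $x$ delivers the claimed inequality; the $t=\tfrac12$ specialization is a direct substitution.

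The main obstacle is not computational but conceptual: one must spot the correct asymmetric splitting, namely pair $tB$ with $(1-t)C$ for the Cauchy--Schwarz/operator-norm term, while pairing $(1-t)B$ with $tC$ for the polarization/numerical-radius term. The signal in the statement is visible from the coefficients themselves: the squared weights $t^{2}$ and $(1-t)^{2}$ sitting in the quadratic form $t^{2}B^{*}B+(1-t)^{2}C^{*}C$ force the first pairing, whereas the complementary \emph{linear} weights $(1-t)$ and $t$ inside $(1-t)B\pm tC$ force the second. Once the splitting is identified, the triangle inequality on $\mathbb{C}^{2}$ is what glues the two estimates together, and no further ingredients are needed.
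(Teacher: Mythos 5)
Your proposal is correct and follows essentially the same route as the paper's own proof: the same asymmetric splitting $\langle Bx,x\rangle = t\langle Bx,x\rangle+(1-t)\langle Bx,x\rangle$, $\langle Cx,x\rangle=(1-t)\langle Cx,x\rangle+t\langle Cx,x\rangle$, Minkowski's (triangle) inequality in $\mathbb{C}^2$, Cauchy--Schwarz giving $\left\|t^2B^*B+(1-t)^2C^*C\right\|^{1/2}$ for the first summand, and the parallelogram identity giving the numerical-radius term for the second. All steps check, including the $t=\tfrac12$ specialization with the factor $\tfrac{1}{2\sqrt{2}}$.
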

\begin{proof}
	Take $x\in \mathscr{H}$ with $\|x\|=1$. We have
	\begin{eqnarray*}
		&&\left(|\langle Bx,x\rangle|^2+|\langle Cx,x\rangle|^2\right)^\frac12\\
		&=& \left(|t\langle Bx,x\rangle+(1-t)\langle Bx,x\rangle|^2+|(1-t)\langle Cx,x\rangle+t\langle Cx,x\rangle|^2\right)^\frac12\\
		&\leq& \left( t^2|\langle Bx,x\rangle|^2+(1-t)^2|\langle Cx,x\rangle|^2\right)^\frac12+\left( (1-t)^2|\langle Bx,x\rangle|^2+t^2|\langle Cx,x\rangle|^2\right)^\frac12\\
		&&\,\,\,\,\,\,\,\,\,\,\,\,\,\,\,\,\,\,\,\,\,\,\,\,\,\,\,\,\,\,\,\,\,\,\,\,\,\,\,\,\,\,\,\,\,\,\,\,\,\,\,\,\,\,\,\,\,\,\,\,\,\,\,\,\,\,\,\,\,\,\,\,\,\,\,\,\,\,\,\,\,\,\,\,\,\,\,\,\,\,\,\,\,\,\,\,\,\,\,\,\,\,\,\,\,\,\,\,\,\,\,(\textit{by Minkowski inequality})\\
		&\leq& \left( t^2\|Bx\|^2+(1-t)^2\|Cx\|^2\right)^\frac12\\
		&&+ \left(\frac12|\langle\left((1-t)B+tC\right)x,x\rangle|^2
		+\frac12|\langle\left((1-t)B-tC\right)x,x\rangle|^2\right)^\frac12\\
		&\leq& \left\| t^2 B^*B+(1-t)^2C^*C\right\|^\frac12+\left\lbrace \frac12 w^2((1-t)B+tC)+\frac12 w^2((1-t)B-tC)\right\rbrace^\frac12.
	\end{eqnarray*}
	Taking supremum over all $x \in \mathscr{H}$ with $\|x\|=1$, we get the first inequality. In particular, considering $t=\frac12$ we get the second inequality.
\end{proof}

Our next result reads as follows.
\begin{theorem}\label{corp1}
	Let $T\in \mathbb{ B}(\mathscr{H}),$ then the following inequalities hold:
\[(i) \, \, \sqrt{\frac14\|T^*T+TT^*\|+ \alpha} \leq w(T) \leq \sqrt{\frac14\|T^*T+TT^*\|+ \beta},\]
where $ \alpha = \frac12 \big| \|Re(T)\|^2-\|Im(T)\|^2\big|,$ $ \beta = \frac12 \left( \|Re(T)\|^2+\|Im(T)\|^2˘\right).$
\[(ii) \, \, \sqrt{\frac14\|T^*T+TT^*\|+ \gamma} \leq w(T) \leq \sqrt{\frac14\|T^*T+TT^*\|+ \delta} ,\]
where 
\begin{eqnarray*}
\gamma &=  &\frac{1}{4}\left| \|Re(T)+Im(T)\|^2-\|Re(T)-Im(T)\|^2\right| , \\
\delta & =  & \frac{1}{4} \left(\|Re(T)+Im(T)\|^2+\|Re(T)-Im(T)\|^2\right).
\end{eqnarray*}

\end{theorem}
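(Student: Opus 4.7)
The plan is to get the two lower bounds essentially for free from earlier results in the paper (and the cited literature), and to derive the two upper bounds as direct applications of Theorem \ref{th3} with two well-chosen pairs $(B,C)$, followed by the elementary inequality $(a+b)^2\le 2(a^2+b^2)$.

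For the lower bound in (i), note that the inequality $\sqrt{\frac14\|T^*T+TT^*\|+\frac12\bigl|\|Re(T)\|^2-\|Im(T)\|^2\bigr|}\le w(T)$ is precisely \eqref{T} from \cite{LAA21}, and it is also an immediate consequence of Corollary \ref{pcor} (which drops an extra nonnegative term $\frac14(\|Re(T)\|^2+\|Im(T)\|^2)-\frac18\|T^*T+TT^*\|\ge 0$, using $\|Re(T)^2+Im(T)^2\|=\frac12\|T^*T+TT^*\|$ and the triangle inequality). Similarly, the lower bound in (ii) is the inequality of \cite[Th.~2.3]{psk1} mentioned in the remark following Corollary \ref{pcor1}, and is implied by Corollary \ref{pcor1}.

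For the upper bound in (ii), apply Theorem \ref{th3} with $t=\frac12$, $B=Re(T)$ and $C=Im(T)$. The first observation is that, since $Re(T),Im(T)$ are self-adjoint, $\langle Bx,x\rangle$ and $\langle Cx,x\rangle$ are real and $|\langle Bx,x\rangle|^2+|\langle Cx,x\rangle|^2=|\langle Tx,x\rangle|^2$, so $w_e(B,C)=w(T)$. Moreover $B^*B+C^*C=Re(T)^2+Im(T)^2=\frac12(T^*T+TT^*)$, and $w(B\pm C)=\|Re(T)\pm Im(T)\|$ by self-adjointness. Substituting into \eqref{eq22} gives
\[
w(T)\le \frac{1}{2\sqrt2}\|T^*T+TT^*\|^{1/2}+\frac{1}{2\sqrt2}\sqrt{\|Re(T)+Im(T)\|^2+\|Re(T)-Im(T)\|^2},
\]
and then squaring via $(a+b)^2\le 2(a^2+b^2)$ produces exactly $w(T)^2\le\frac14\|T^*T+TT^*\|+\delta$.

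For the upper bound in (i), use Theorem \ref{th3} with $t=\frac12$ and the rotated pair $B=\frac{1}{\sqrt2}(Re(T)+Im(T))$, $C=\frac{1}{\sqrt2}(Re(T)-Im(T))$. A direct calculation (using the polarization identity $\frac12(a+b)^2+\frac12(a-b)^2=a^2+b^2$ on the real numbers $a=\langle Re(T)x,x\rangle$, $b=\langle Im(T)x,x\rangle$) again gives $w_e(B,C)=w(T)$, while $B^*B+C^*C=Re(T)^2+Im(T)^2=\frac12(T^*T+TT^*)$ and $B+C=\sqrt2\,Re(T)$, $B-C=\sqrt2\,Im(T)$ so that $w(B\pm C)=\sqrt2\,\|Re(T)\|$ and $\sqrt2\,\|Im(T)\|$ respectively. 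Plugging into \eqref{eq22} yields
\[
w(T)\le \frac{1}{2\sqrt2}\|T^*T+TT^*\|^{1/2}+\frac{1}{2}\sqrt{\|Re(T)\|^2+\|Im(T)\|^2},
\]
and squaring with $(a+b)^2\le 2(a^2+b^2)$ gives $w(T)^2\le\frac14\|T^*T+TT^*\|+\beta$, as required. The entire argument is routine once the right $(B,C)$ pair is chosen in each case; the only mildly non-obvious step is recognizing that the rotated pair in (i) still satisfies $w_e(B,C)=w(T)$, so that is where I would be most careful.
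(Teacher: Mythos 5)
Your proof is correct and follows essentially the same route as the paper: the lower bounds are quoted from \cite[Th. 2.9]{LAA21} and \cite[Th. 2.3]{psk1} (your observation that they also follow from Corollaries \ref{pcor} and \ref{pcor1} is sound), and the upper bounds come from inequality \eqref{eq22} followed by $(a+b)^2\le 2(a^2+b^2)$, exactly as the paper intends. The only cosmetic difference is in the upper bound of (i), where the paper takes $B=T$, $C=T^*$ rather than your rotated pair $\frac{1}{\sqrt2}\left(Re(T)\pm Im(T)\right)$; since $w_e(T,T^*)=\sqrt{2}\,w(T)$, $T+T^*=2Re(T)$ and $T-T^*=2i\,Im(T)$, both choices produce the identical intermediate bound $w(T)\le \frac{1}{2\sqrt2}\|T^*T+TT^*\|^{1/2}+\frac12\left(\|Re(T)\|^2+\|Im(T)\|^2\right)^{1/2}$.
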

\begin{proof}
	(i) First inequality follows from  \cite[Th. 2.9]{LAA21} and the second inequality follows from the inequality \eqref{eq22} by considering $B=T$ and $C=T^*.$\\
	(ii)  First inequality follows from \cite[Th. 2.3]{psk1}  and the second inequality follows from the inequality \eqref{eq22} by considering $B= Re(T)$ and $C=Im(T).$
\end{proof}

To present our next result need the following Hermite-Hadamard inequality, see  \cite[p. 137]{PPT}. For a convex function $f:J\rightarrow \mathbb{R}$ and $a,b\in J$,  we have 
 \begin{eqnarray}
 f\left(\frac{a+b}{2}\right) \leq\int_{0}^{1} f(ta+(1-t)b)dt\leq \frac{f(a)+f(b)}{2}.
\label{eq21} \end{eqnarray}

Also, we need the following lemmas.

\begin{lemma}\cite[(4.24)]{MP}  
 Let $A\in\mathbb{B}(\mathscr{H})$ be self-adjoint with spectrum contained in the interval $J$ and let $x\in\mathscr{H}$ with $\|x\|=1$. If $f$ is a convex function on $J$, then $$
f (\langle Ax, x\rangle)\leq\langle f(A)x, x\rangle.$$
\label{M}\end{lemma}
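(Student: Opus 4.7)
The plan is to reduce the inequality to the classical (scalar) Jensen inequality by running the spectral theorem on $A$. Since $A$ is self-adjoint with $\sigma(A)\subseteq J$, the functional calculus provides a projection-valued spectral measure $E$, supported on $\sigma(A)$, such that $A=\int_{J}\lambda\, dE(\lambda)$ and $f(A)=\int_{J} f(\lambda)\, dE(\lambda)$ for any bounded continuous (or Borel) $f$ on $J$. I would first record this setup and then turn the operator-valued integrals into scalar ones by pairing with the unit vector $x$.

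Next I would introduce the scalar measure $\mu_x$ on the Borel subsets of $J$ defined by $\mu_x(S)=\langle E(S)x,x\rangle$. Since $E$ is a resolution of the identity and $\|x\|=1$, $\mu_x$ is a Borel probability measure on $J$. The key identities are then
\[
\langle Ax,x\rangle=\int_{J}\lambda\, d\mu_x(\lambda),\qquad \langle f(A)x,x\rangle=\int_{J} f(\lambda)\, d\mu_x(\lambda).
\]
In particular, $\langle Ax,x\rangle\in J$ (it lies in the convex hull of $\sigma(A)\subseteq J$), so the left-hand side $f(\langle Ax,x\rangle)$ is well-defined.

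The final step is the classical Jensen inequality applied to the convex function $f$ and the probability measure $\mu_x$:
\[
f\!\left(\int_{J}\lambda\, d\mu_x(\lambda)\right)\leq \int_{J} f(\lambda)\, d\mu_x(\lambda).
\]
Substituting the two identities above converts this into $f(\langle Ax,x\rangle)\leq \langle f(A)x,x\rangle$, which is the claim.

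The only genuine subtlety, and the point I expect to spend most care on, is the measure-theoretic bookkeeping: verifying that $\mu_x$ is indeed a probability measure, that the scalar integral representation of $\langle f(A)x,x\rangle$ is valid for the class of convex functions under consideration (a convex function on an interval is automatically Borel, but one should handle possible unboundedness or discontinuities at the endpoints of $J$ by either restricting to a compact subinterval containing $\sigma(A)$ or by approximating $f$ from below by continuous convex functions). Once this bookkeeping is done, the operator inequality is just scalar Jensen in disguise.
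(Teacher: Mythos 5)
Your proof is correct. Note, though, that the paper offers no proof of this lemma at all: it is quoted verbatim from Bhatia's \emph{Positive Definite Matrices} (inequality (4.24) there), so there is no internal argument to compare against. In Bhatia's matrix setting the standard proof is the discrete version of exactly what you do: write $A=\sum_j \lambda_j P_j$ via the spectral decomposition, observe that $\langle Ax,x\rangle=\sum_j \lambda_j \|P_jx\|^2$ is a convex combination of the eigenvalues, and apply scalar Jensen. Your projection-valued-measure formulation is the right generalization for the paper's setting of a general Hilbert space $\mathscr{H}$, and the subtleties you flag are genuine but all resolvable: $\mu_x$ is a probability measure since $E(J)=I$ and $\|x\|=1$; a convex function on an interval is Borel, and it is automatically bounded on any compact subinterval containing $\sigma(A)$, so the Borel functional calculus and the integral identities are unproblematic; and if the barycenter $\langle Ax,x\rangle$ sits at an endpoint of $J$ where $f$ may jump, the measure $\mu_x$ is forced to be a point mass there, so Jensen holds trivially. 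One remark worth knowing: all of this bookkeeping can be bypassed by the supporting-line trick --- pick an affine $\ell(t)=f(t_0)+m(t-t_0)$ with $\ell\leq f$ on $J$ at $t_0=\langle Ax,x\rangle$; then $f(A)\geq \ell(A)$ as operators (comparison of Borel functions on the spectrum), hence $\langle f(A)x,x\rangle\geq \ell(\langle Ax,x\rangle)=f(t_0)$. That one-line argument buys the same conclusion with no measure theory, while your route has the advantage of exhibiting the inequality as literally scalar Jensen, which makes equality cases transparent.
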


\begin{lemma}\cite{kato}(Generalized Cauchy-Schwarz inequality)  
 If $A\in\mathbb{B}(\mathscr{H})$, then 
 $$ |\langle Ax,y\rangle|^2\leq\langle|A|^{2\alpha} x,x\rangle\langle|A^*|^{2(1-\alpha)}y,y\rangle,$$ for all $x,y\in\mathscr{H}$ and for all $ \alpha\in [0,1]$.

\label{lem1}\end{lemma}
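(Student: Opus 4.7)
The plan is to reduce Kato's mixed Cauchy--Schwarz inequality to the ordinary Cauchy--Schwarz inequality by combining the polar decomposition of $A$ with the continuous functional calculus on the positive operator $|A|$. First I would write $A = U|A|$, where $U$ is the partial isometry in the polar decomposition of $A$, and split $|A| = |A|^{1-\alpha}|A|^{\alpha}$ for $\alpha \in [0,1]$; both factors are positive, self-adjoint, and commute, so the factorisation is unambiguous.

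Next I would shift one factor across the adjoint:
\[
\langle Ax, y\rangle \;=\; \langle U|A|^{1-\alpha}|A|^{\alpha}x,\, y\rangle \;=\; \langle |A|^{\alpha}x,\, |A|^{1-\alpha}U^{*}y\rangle,
\]
using that $|A|^{1-\alpha}$ is self-adjoint. The ordinary Cauchy--Schwarz inequality then immediately yields
\[
|\langle Ax, y\rangle|^{2} \;\leq\; \langle |A|^{2\alpha}x,\, x\rangle \; \langle U|A|^{2(1-\alpha)}U^{*}y,\, y\rangle.
\]

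What remains, and this is where the only real work lies, is to identify $U|A|^{2(1-\alpha)}U^{*}$ with $|A^{*}|^{2(1-\alpha)}$. From $A = U|A|$ one has $AA^{*} = U|A|^{2}U^{*}$, so $|A^{*}|^{2} = U|A|^{2}U^{*}$. The obstacle is that $U$ is only a partial isometry, so the identity cannot be iterated naively; however, $U^{*}U$ is the orthogonal projection onto $\overline{\mathrm{Ran}(|A|)}$, and the range of $|A|^{2\beta}$ is contained in this subspace for every $\beta > 0$, so $U^{*}U\,|A|^{2\beta} = |A|^{2\beta}$. This gives $(U|A|^{2\beta}U^{*})^{n} = U|A|^{2\beta n}U^{*}$ for every positive integer $n$ by induction, and extending from monomials to arbitrary continuous functions on the spectrum via the spectral theorem yields $|A^{*}|^{2(1-\alpha)} = U|A|^{2(1-\alpha)}U^{*}$, at which point the stated inequality follows. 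The boundary cases $\alpha \in \{0,1\}$ reduce to the ordinary Cauchy--Schwarz inequality applied to $A$ or $A^{*}$ and can be verified directly, so the non-invertibility of $A$ causes no issue.
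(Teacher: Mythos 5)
The paper does not actually prove this lemma: it is quoted from Kato's 1952 paper (reference \cite{kato}) and used as a black box, so there is no internal proof to compare against and your argument has to stand on its own. It does. The route you take --- polar decomposition $A=U|A|$, splitting $|A|=|A|^{1-\alpha}|A|^{\alpha}$, the ordinary Cauchy--Schwarz inequality, and the intertwining identity $U|A|^{2(1-\alpha)}U^{*}=|A^{*}|^{2(1-\alpha)}$ --- is the standard modern proof of Kato's mixed Schwarz inequality, and you correctly isolate the only delicate point, namely that $U$ is merely a partial isometry. Your fix is the right one: $U^{*}U$ is the projection onto $\overline{\mathrm{Ran}(|A|)}=(\ker|A|)^{\perp}$, which contains $\mathrm{Ran}(|A|^{2\beta})$ for every $\beta>0$, so $U^{*}U|A|^{2\beta}=|A|^{2\beta}$ and the monomial identity $(U|A|^{2\beta}U^{*})^{n}=U|A|^{2\beta n}U^{*}$ follows by induction.

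One imprecision worth fixing: the extension step cannot reach \emph{arbitrary} continuous functions, as you claim. Since $UU^{*}\neq I$ in general, the identity $f(U|A|^{2}U^{*})=Uf(|A|^{2})U^{*}$ fails already for $f\equiv 1$; the monomials $t^{n}$, $n\geq 1$, span (in sup norm, by Stone--Weierstrass) only the continuous functions on $[0,\|A\|^{2}]$ vanishing at the origin, and that is the correct class for which the identity holds. This costs you nothing, because the function you apply it to, $t\mapsto t^{1-\alpha}$, vanishes at $0$ exactly when $\alpha<1$, and you already dispose of $\alpha=1$ (and $\alpha=0$) by a direct Cauchy--Schwarz computation. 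So the proof is correct as structured; just replace ``arbitrary continuous functions'' by ``continuous functions vanishing at $0$'' to make the approximation step airtight.
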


Now we are in a position to prove our result.

\begin{theorem}\label{cor2}
Let $ B,C\in\mathbb{B}(\mathscr{H})$. If $f:[0,\infty)\rightarrow [0,\infty)$ is an increasing operator convex function, then 
\begin{eqnarray*}
	f\left(w_{e}^2(B,C)\right)&\leq&\left\|\int_{0}^{1} f \left(t(B^*B+C^*C)+(1-t)(BB^*+CC^*)\right)dt\right\|\\
	&\leq& \frac12\left\| f (B^*B+C^*C)+ f(BB^*+CC^*)\right\|.
\end{eqnarray*}
\end{theorem}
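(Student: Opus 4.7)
The plan is to establish a pointwise inequality bounding $|\langle Bx, x\rangle|^2 + |\langle Cx, x\rangle|^2$ by the inner product of $x$ with a convex combination of $B^*B+C^*C$ and $BB^*+CC^*$, then to promote it through $f$ using the operator Jensen inequality (Lemma~\ref{M}), to integrate in $t$, and finally to invoke operator convexity of $f$ together with the Hermite--Hadamard inequality \eqref{eq21} for the second bound.

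Fix a unit vector $x \in \mathscr{H}$ and $t \in [0,1]$. Taking $y=x$ and $\alpha=t$ in Lemma~\ref{lem1} gives
$$|\langle Bx, x\rangle|^2 \le \langle |B|^{2t} x, x\rangle \, \langle |B^*|^{2(1-t)} x, x\rangle.$$
Since $s \mapsto s^t$ and $s \mapsto s^{1-t}$ are concave on $[0,\infty)$ for $t \in [0,1]$, the reverse of the inequality in Lemma~\ref{M} gives $\langle |B|^{2t} x, x\rangle \le \langle B^*B\,x, x\rangle^{t}$ and $\langle |B^*|^{2(1-t)} x, x\rangle \le \langle BB^*\,x, x\rangle^{1-t}$. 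Combining these with Young's inequality $a^t b^{1-t} \le ta + (1-t)b$ yields
$$|\langle Bx, x\rangle|^2 \le t\,\langle B^*B\,x, x\rangle + (1-t)\,\langle BB^*\,x, x\rangle.$$
The same chain applied to $C$ and summing produces
$$|\langle Bx, x\rangle|^2 + |\langle Cx, x\rangle|^2 \le \bigl\langle \bigl[t(B^*B+C^*C) + (1-t)(BB^*+CC^*)\bigr]x, x\bigr\rangle.$$

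Since $f$ is increasing and convex on $[0,\infty)$ and the right-hand argument above is a positive operator, applying $f$ scalarly and then Lemma~\ref{M} gives
$$f\bigl(|\langle Bx, x\rangle|^2 + |\langle Cx, x\rangle|^2\bigr) \le \bigl\langle f\bigl(t(B^*B+C^*C) + (1-t)(BB^*+CC^*)\bigr)x, x\bigr\rangle.$$
The left-hand side is independent of $t$, so integrating over $t \in [0,1]$ and taking the supremum over unit vectors $x$, together with continuity and monotonicity of $f$, delivers the first inequality of the theorem.

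For the second inequality, the operator convexity of $f$ provides the operator inequality
$$f\bigl(t(B^*B+C^*C) + (1-t)(BB^*+CC^*)\bigr) \le t\,f(B^*B+C^*C) + (1-t)\,f(BB^*+CC^*)$$
for every $t \in [0,1]$, which on integration is exactly the operator form of the right half of the Hermite--Hadamard inequality \eqref{eq21}. Taking the operator norm of both positive sides completes the proof. The main obstacle is the pointwise scalar step: one must thread Kato's inequality, Jensen in its concave form, and Young's inequality in the correct order, recalling that Lemma~\ref{M} is stated for \emph{convex} functions so that its use for $s \mapsto s^t$ with $0 \le t \le 1$ requires the reversed direction.
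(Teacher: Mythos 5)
Your proof is correct, but it generates the integral over $t$ by a genuinely different mechanism than the paper. The paper works only at the midpoint: it applies Kato's inequality (Lemma \ref{lem1}) with $\alpha=\tfrac12$, couples the $B$- and $C$-terms via the Cauchy--Schwarz inequality in $\mathbb{R}^2$ together with $\langle |B|x,x\rangle^2\leq\langle |B|^2x,x\rangle$, and so arrives at the single scalar estimate $|\langle Bx,x\rangle|^2+|\langle Cx,x\rangle|^2\leq\frac12\langle(B^*B+C^*C)x,x\rangle+\frac12\langle(BB^*+CC^*)x,x\rangle$; the integral is then manufactured from this one midpoint bound by the \emph{left} half of the Hermite--Hadamard inequality \eqref{eq21}, after which Lemma \ref{M} and operator convexity finish as in your argument. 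You instead establish the $t$-parametrized family of scalar bounds $|\langle Bx,x\rangle|^2+|\langle Cx,x\rangle|^2\leq\langle[t(B^*B+C^*C)+(1-t)(BB^*+CC^*)]x,x\rangle$ for every $t\in[0,1]$ (Kato at $\alpha=t$, the concave/McCarthy form of Jensen, then Young), so after applying the increasing $f$ and Lemma \ref{M} the integral arises by plain integration of a bound whose left side is constant in $t$; Hermite--Hadamard is not needed at all for the first inequality, and both proofs use operator convexity identically for the second. Your route is pointwise stronger (your family at $t=0$ and $t=1$ already gives $|\langle Bx,x\rangle|^2+|\langle Cx,x\rangle|^2\leq\min\{\|Bx\|^2+\|Cx\|^2,\ \|B^*x\|^2+\|C^*x\|^2\}$, of which the paper's midpoint bound is a consequence) and avoids the two-term Cauchy--Schwarz coupling entirely, since you may simply add the bounds for $B$ and $C$; what the paper's route buys is economy within its own framework, reusing \eqref{eq21}, which it needs anyway for the second inequality in operator form. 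One further simplification available to you: the chain Kato--McCarthy--Young for a single operator can be compressed, since $|\langle Bx,x\rangle|^2=(|\langle Bx,x\rangle|^2)^t(|\langle Bx,x\rangle|^2)^{1-t}\leq\|Bx\|^{2t}\|B^*x\|^{2(1-t)}\leq t\|Bx\|^2+(1-t)\|B^*x\|^2$ by Cauchy--Schwarz and Young alone, with no need to invoke the reversed (concave) form of Lemma \ref{M} --- though your use of that reversed form is itself correct and correctly flagged.
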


\begin{proof}
Take $x\in \mathscr{H}$ with $\|x\|=1$. We have
\begin{eqnarray*}
&&f(|\langle Bx,x\rangle|^2+|\langle Cx,x\rangle|^2)\\
&&\leq f(\langle |B|x,x\rangle\langle |B^*|x,x\rangle+\langle |C|x,x\rangle\langle |C^*|x,x\rangle) \,\,(\textit{by Lemma \ref{lem1}})\\
&&\leq f\left(\{\langle |B|x,x\rangle^2+\langle |C|x,x\rangle^2\}^\frac12\{\langle |B^*|x,x\rangle^2+\langle |C^*|x,x\rangle^2\}^\frac12\right)\\
&&\leq f\left(\frac12\langle(B^*B+C^*C)x,x\rangle+\frac12\langle(BB^*+CC^*)x,x\rangle\right)\\
&&\leq \int_{0}^{1}f\left( \left\langle\big(t(B^*B+C^*C)+(1-t)(BB^*+CC^*)\big)x,x\right\rangle \right) dt \,\,(\mbox{by \eqref{eq21}}).
\end{eqnarray*}
Now, 
\begin{eqnarray*}
&&f\left( \left\langle\big(t(B^*B+C^*C)+(1-t)(BB^*+CC^*)\big)x,x\right\rangle \right)\\
&&\leq\left\langle f \big(t(B^*B+C^*C)+(1-t)(BB^*+CC^*)\big)x,x \right\rangle \,(\textit{by Lemma \ref{M}})\\
&&\leq  t\langle f (B^*B+C^*C)x,x\rangle+(1-t)\langle f(BB^*+CC^*)x,x\rangle,
\end{eqnarray*}
where the last inequality follows form operator convexity of $f$. Therefore,
\begin{eqnarray*}
&&\int_{0}^{1}f\left( \left\langle\big(t(B^*B+C^*C)+(1-t)(BB^*+CC^*)\big)x,x\right\rangle \right) dt\\
&& \leq\langle \int_{0}^{1} f \big(t(B^*B+C^*C)+(1-t)(BB^*+CC^*)\big)dt \, x,x\rangle\\ 
&& \leq \frac12\big(\langle f (B^*B+C^*C)x,x\rangle+\langle f(BB^*+CC^*)x,x\rangle\big).
\end{eqnarray*}
Taking the supremum over $x\in \mathscr{H}$ with $\|x\|=1$, we get 
\begin{eqnarray*}
f\left(w_{e}^2(B,C)\right)&\leq&\left\|\int_{0}^{1} f \big(t(B^*B+C^*C)+(1-t)(BB^*+CC^*)\big)dt\right\|\\
&\leq& \frac12\left\| f (B^*B+C^*C)+ f(BB^*+CC^*)\right\|.
\end{eqnarray*}
Thus, we complete the proof.
 \end{proof}

Since for $1\leq r\leq 2$ the function  $f(x)=x^r$, $x\geq 0$  is an increasing operator convex function, we have
\begin{eqnarray}\label{0000p}
	w_{e}^{2r}(B,C)&\leq&\left\| \int_{0}^{1}\left( t(B^*B+C^*C)+(1-t)(BB^*+CC^*)\right)^r dt \right \|\\
	&\leq&\frac12\|(B^*B+C^*C)^r+(BB^*+CC^*)^r\|.
\end{eqnarray}
 In particular, for $r=1$,
 \begin{eqnarray}\label{00pp}
 	w_{e}^{2}(B,C)&\leq&\left\| \int_{0}^{1}\left( t(B^*B+C^*C)+(1-t)(BB^*+CC^*)\right) dt \right \|\nonumber\\
 	&\leq&\frac12\|(B^*B+C^*C)+(BB^*+CC^*)\|.
 \end{eqnarray}
The above inequality can also be derived from 
 \begin{eqnarray*}\label{eq23} w_{e}^2(B,C)\leq \left\|\alpha (|B|^2+|C|^2)+(1-\alpha)(|B^*|^2+|C^*|^2)\right\|, \, 0\leq \alpha\leq 1,
   \end{eqnarray*}
proved by Moslehian et al. \cite[Prop. 3.9]{SMS}.
Now, if we take $B=C=T$ in \eqref{0000p} we obtain the following numerical radius inequality.

\begin{cor}\label{corppp}
	Let $T\in \mathbb{ B}(\mathscr{H})$, then
	\begin{eqnarray*}
		w^2(T) &\leq& \left\| \int_{0}^{1}\big( tT^*T+(1-t)TT^*\big)^r dt \right \|^{{1}/{r}}
		\,\, \leq \,\, \left\|\frac{(T^*T)^r+(TT^*)^r}{2}\right\|^{{1}/{r}},
	\end{eqnarray*}
for $1\leq r \leq 2.$
\end{cor}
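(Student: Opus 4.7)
The plan is to obtain this corollary as a direct specialization of the inequality chain \eqref{0000p} (for $1\le r\le 2$) with the choice $B=C=T$, followed by a simple algebraic normalization and by taking $r$-th roots.

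First I would compute the Euclidean operator radius with $B=C=T$. From the definition,
\[
w_e^2(T,T)=\sup_{\|x\|=1}\bigl(|\langle Tx,x\rangle|^2+|\langle Tx,x\rangle|^2\bigr)=2\,w^2(T),
\]
so that $w_e^{2r}(T,T)=2^r w^{2r}(T)$. Next, I would substitute $B=C=T$ on the right-hand sides of \eqref{0000p}. This gives $B^*B+C^*C=2T^*T$ and $BB^*+CC^*=2TT^*$. Pulling the factor of $2$ out of the $r$-th power yields
\[
\left\| \int_{0}^{1}\bigl( t(B^*B+C^*C)+(1-t)(BB^*+CC^*)\bigr)^r dt \right\|
=2^r\left\|\int_0^1\bigl(tT^*T+(1-t)TT^*\bigr)^r dt\right\|,
\]
and similarly
\[
\tfrac12\bigl\|(B^*B+C^*C)^r+(BB^*+CC^*)^r\bigr\|=2^{r-1}\bigl\|(T^*T)^r+(TT^*)^r\bigr\|.
\]

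Inserting these into \eqref{0000p} produces
\[
2^r w^{2r}(T)\le 2^r\left\|\int_0^1\bigl(tT^*T+(1-t)TT^*\bigr)^r dt\right\|\le 2^{r-1}\bigl\|(T^*T)^r+(TT^*)^r\bigr\|.
\]
Finally I would divide through by $2^r$ and take the $r$-th root (which preserves the inequalities since all terms are nonnegative and $r\ge 1$), giving exactly the stated bound.

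There is no real obstacle here; the corollary is essentially a substitution identity together with the elementary scaling of the $r$-th power. The only subtlety worth checking is that the range $1\le r\le 2$ is inherited from \eqref{0000p}, where it was needed to ensure $f(x)=x^r$ is operator convex on $[0,\infty)$, and that taking the $r$-th root is order-preserving under this hypothesis. No additional lemma from outside the material already in the excerpt is required.
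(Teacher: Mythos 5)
Your proposal is correct and matches the paper's own route exactly: the paper obtains this corollary by setting $B=C=T$ in \eqref{0000p}, and your computation of $w_e^2(T,T)=2\,w^2(T)$ together with the factor-of-$2^r$ normalization and taking $r$-th roots supplies precisely the details the paper leaves implicit.
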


Next, in the following theorem we develop a lower bound for the numerical radius of a bounded linear operator $T$.

\begin{theorem}\label{th11}
	Let $T\in\mathbb{B}(\mathscr{H}),$ then 
	$$\frac{1}{4}\|T\|+\frac14 \left(\|Re(T)\|+\|Im(T)\| \right)+\frac12 \left|\|Re(T)\|-\|Im(T)\| \right| \leq w(T) .$$
\end{theorem}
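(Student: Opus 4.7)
The approach is elementary and does not require the Euclidean operator radius machinery developed earlier in the paper; the plan is to bound the left-hand side directly using two classical facts about the Cartesian decomposition together with the identity $\max\{a,b\}=\tfrac12(a+b)+\tfrac12|a-b|$.

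First I would record the standard fact that $w(T)\geq \max\{\|Re(T)\|,\|Im(T)\|\}$. Since $Re(T)$ and $Im(T)$ are self-adjoint, $\langle Re(T)x,x\rangle$ and $\langle Im(T)x,x\rangle$ are real for every unit $x\in\mathscr{H}$, so
\[
|\langle Re(T)x,x\rangle|=|Re\,\langle Tx,x\rangle|\leq |\langle Tx,x\rangle|,
\]
and similarly for $Im(T)$. Taking the supremum over unit vectors and using $w(A)=\|A\|$ for self-adjoint $A$ gives $w(T)\geq \|Re(T)\|$ and $w(T)\geq \|Im(T)\|$.

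Next I would apply $\max\{a,b\}=\tfrac{a+b}{2}+\tfrac{|a-b|}{2}$ with $a=\|Re(T)\|$ and $b=\|Im(T)\|$ to rewrite the previous bound as
\[
w(T)\geq \tfrac12\bigl(\|Re(T)\|+\|Im(T)\|\bigr)+\tfrac12\bigl|\|Re(T)\|-\|Im(T)\|\bigr|.
\]
The final ingredient is the triangle inequality applied to the Cartesian decomposition $T=Re(T)+iIm(T)$, which gives $\|T\|\leq \|Re(T)\|+\|Im(T)\|$, and therefore $\tfrac14\|T\|\leq \tfrac14(\|Re(T)\|+\|Im(T)\|)$. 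Adding this last inequality term-by-term to the displayed lower bound for $w(T)$, the arithmetic collapses to exactly
\[
\tfrac14\|T\|+\tfrac14\bigl(\|Re(T)\|+\|Im(T)\|\bigr)+\tfrac12\bigl|\|Re(T)\|-\|Im(T)\|\bigr|\leq \max\{\|Re(T)\|,\|Im(T)\|\}\leq w(T),
\]
which is the desired inequality.

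There is no genuine obstacle here; the only point worth highlighting is that the coefficients $\tfrac14,\tfrac14,\tfrac12$ on the three terms of the left-hand side are arranged precisely so that estimating $\|T\|$ by $\|Re(T)\|+\|Im(T)\|$ telescopes the expression to $\tfrac12(\|Re(T)\|+\|Im(T)\|)+\tfrac12|\|Re(T)\|-\|Im(T)\|| = \max\{\|Re(T)\|,\|Im(T)\|\}$, which is already a standard lower bound for $w(T)$.
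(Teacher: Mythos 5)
Your proof is correct, and it takes a genuinely shorter route than the paper. The paper's proof uses three lower bounds, $w(T)\geq\|Re(T)\|$, $w(T)\geq\|Im(T)\|$ \emph{and} $w(T)\geq\frac12\|T\|$: it forms $q_1=\max\{\|Re(T)\|,\frac12\|T\|\}$, $q_2=\max\{\|Im(T)\|,\frac12\|T\|\}$, expands $w(T)\geq\max\{q_1,q_2\}=\frac12(q_1+q_2)+\frac12|q_1-q_2|$, applies the same triangle inequality $\|Re(T)+iIm(T)\|\leq\|Re(T)\|+\|Im(T)\|$ that you use (in the middle of the chain rather than at the end), and recombines. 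Your argument shows the auxiliary ingredient $\frac12\|T\|\leq w(T)$ is in fact redundant: since $\frac12\|T\|\leq\frac12(\|Re(T)\|+\|Im(T)\|)\leq\max\{\|Re(T)\|,\|Im(T)\|\}$, one has $\max\{q_1,q_2\}=\max\{\|Re(T)\|,\|Im(T)\|\}$, and the whole theorem reduces, as you say, to the classical bound $\max\{\|Re(T)\|,\|Im(T)\|\}\leq w(T)$ plus the observation that the stated left-hand side differs from that maximum by exactly $\frac14\left(\|Re(T)\|+\|Im(T)\|-\|T\|\right)\geq 0$. What the paper's longer bookkeeping buys is presentational: its intermediate line $\frac12\|T\|+\frac14(r_1+r_2)+\frac12|q_1-q_2|$ displays the bound as the Hirzallah--Kittaneh--Shebrawi bound \eqref{00p} plus a nonnegative term, so the refinement claims in the subsequent remark can be read off the chain. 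What your route buys is brevity and a sharper structural insight, slightly deflating for the theorem: the new lower bound, while refining \eqref{00p} and \eqref{ppp}, can never exceed the elementary bound $\max\{\|Re(T)\|,\|Im(T)\|\}$.
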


\begin{proof}
	From the Cartesian decomposition of $T$, it is easy to verify that  $\|Re(T)\|\leq w(T),$ $\|Im(T)\|\leq w(T)$ and $\frac12 \|T\|\leq w(T).$ Take $r_1=  \big|\|Re(T)\|-\frac12 \|T\|\big|$, $r_2=\big| \|Im(T)\|-\frac12 \|T\|\big|,$  $q_1=\max \left\{\|Re(T)\|,\frac12 \|T\| \right\}$ and $q_2=\max \left\{\|Im(T)\|,\frac12 \|T\| \right\}.$
	We have
	\begin{eqnarray*}
		w(T)&\geq&\max\{q_1,q_2\}\\
		&=&\frac12(q_1+q_2)+\frac12 \big|q_1-q_2\big|\\
		&=&\frac14 \|T\|+\frac14(\|Re(T)\|+\|Im(T)\|)+\frac14 (r_1+r_2)+ \frac12 \big|q_1-q_2\big|\\
		&\geq& \frac14 \|T\|+\frac14 \|Re(T)+i Im(T)\|+\frac14 (r_1+r_2)+\frac12 \big|q_1-q_2\big|\\
		&=&\frac12 \|T\|+\frac14 (r_1+r_2)+\frac12 \big|q_1-q_2\big|\\
		&=&\frac12 \|T\|+\frac14 \left|\|Re(T)\|-\frac12 \|T\| \right| +\frac14 \left| \|Im(T)\|-\frac12 \|T\| \right| +\frac12 \big|q_1-q_2\big|\\
		&=&\frac{1}{4}\|T\|+\frac14 \left(\|Re(T)\|+\|Im(T)\| \right)+\frac12 \left|\|Re(T)\|-\|Im(T)\| \right|,
	\end{eqnarray*}
	as desired.
\end{proof} 

\begin{remark}
	(i) It follows from \cite{HKS2011} that 
	\begin{eqnarray}\label{00p}
		\frac12 \|T\|+\frac14 \left|\|Re(T)\|-\frac12 \|T\| \right| + \frac14 \left| \|Im(T)\|-\frac12 \|T\| \right| \leq w(T).
	\end{eqnarray}
	Clearly, the inequality in Theorem \ref{th11} refines the inequality \eqref{00p}.\\
	(ii) It follows from  Theorem \ref{th11} that if   $$\frac12 \|T\|+\frac14 \left|\|Re(T)\|-\frac12 \|T\| \right| + \frac14 \left| \|Im(T)\|-\frac12 \|T\| \right| = w(T)$$ then  $\max \left\{\|Re(T)\|,\frac12 \|T\| \right\}=\max \left\{\|Im(T)\|,\frac12 \|T\| \right\}.$ However, the converse may not be true. \\
	(iii)  For $T\in \mathbb{B}(\mathscr {H}),$  Bhunia and Paul \cite[Th. 2.1]{LAA21} proved that 
	\begin{eqnarray}\label{ppp}
		\frac12\|T\|+\frac12\big|\|\Re(T)\|-\|\Im(T)\|\big|\leq w(T).
	\end{eqnarray}
	Clearly, the inequality in Theorem \ref{th11} refines  \eqref{ppp}.\\
	(iv)  It follows from Theorem \ref{th11}  that if
	$ w(T)=\frac12\|T\|+\frac12\big|\|\Re(T)\|-\|\Im(T)\|\big|$, then
	 $\|T\|=\|Re(T)\|+\|Im(T)\|$ and $w(T)=\max\{ \|Re(T)\|, \|Im(T)\|\}.$ 
	 The converse is also true.
\end{remark}


\section {Numerical radius bounds of $2 \times 2$ operator matrices}

Using the numerical radius inequalities obtained in Section 2, here we develop the numerical radius bounds of $2\times 2$ off-diagonal operator matrices. Suppose $\mathscr{H}\oplus \mathscr{H}$ is the direct sum of two copies of $\mathscr{H}$, and 
 $\begin{pmatrix}
	B & X\\
	Y& C
\end{pmatrix}\in \mathbb{B}(\mathscr{H}\oplus \mathscr{H})$ is a $2 \times 2$ operator matrix, defined by $\begin{pmatrix}
B & X\\
Y& C
\end{pmatrix} \begin{pmatrix}
x\\
y
\end{pmatrix}= \begin{pmatrix}
Bx+Xy\\
Yx+Cy
\end{pmatrix},$  $\forall \begin{pmatrix}
x\\
y
\end{pmatrix}\in \mathscr{H}\oplus \mathscr{H}.$
Considering $T=\begin{pmatrix}
	0 & X\\
	Y& 0
\end{pmatrix}\in \mathbb{B}(\mathscr{H}\oplus \mathscr{H})$ in Theorem \ref{th11}, Corollary \ref{pcor}, Corollary \ref{pcor1} and  Theorem \ref{corp1}  respectively, we get the following bounds for the numerical radius of the $2\times 2$ off-diagonal operator matrix $\begin{pmatrix}
0 & X\\
Y& 0
\end{pmatrix}.$

\begin{theorem}\label{0000pp}
	Let $T=\begin{pmatrix}
		0 & X\\
		Y& 0
	\end{pmatrix}\in \mathbb{B}(\mathscr{H}\oplus \mathscr{H})$, then the following inequalities hold:\\
 \begin{eqnarray*}
(i)\,\,	w\left( T\right) &\geq& \max \left\{\frac{\|X\|}4, \frac{\|Y\|}4 \right\}\\
&&+ \frac14 \left(\frac{\|X+Y^*\|}2+\frac{\|X-Y^*\|}2\right)
 +\frac12\left| \frac{\|X+Y^*\|}2-\frac{\|X-Y^*\|}2\right|.\\
(ii) \,\, w^2\left(T\right) &\geq&  \max \left\{\frac{\|X^*X+YY^*\|}8, \frac{\|XX^*+Y^*Y\|}8 \right\}\\
&& +\frac14\left(\frac{\|X+Y^*\|^2}{4}+\frac{\|X-Y^*\|^2}{4}  \right)
  +\frac12 \left|  \frac{\|X+Y^*\|^2}{4}-\frac{\|X-Y^*\|^2}{4} \right|.\\
(iii) \,\,w^2(T) &\geq & \max \left\{\frac{\|X^*X+YY^*\|}8, \frac{\|XX^*+Y^*Y\|}8 \right\}\\
&&+\frac1{8}\left(\frac{\|(1-i)X+(1+i)Y^*\|^2}4+\frac{\|(1+i)X-(1-i)Y^*\|^2}4  \right)\\
&&  +\frac1{4} \left| \frac{\|(1-i)X+(1+i)Y^*\|^2}4-\frac{\|(1+i)X-(1-i)Y^*\|^2}4 \right|.\\
(iv)\,\, w^2(T) &\leq&   \max \left\{\frac{\|X^*X+YY^*\|}4, \frac{\|XX^*+Y^*Y\|}4 \right\}+ \frac12  \left|  \frac{\|X+Y^*\|^2}{4}+\frac{\|X-Y^*\|^2}{4} \right|.\\
(v)\,\, w^2(T) &\leq&  \max \left\{\frac{\|X^*X+YY^*\|}4, \frac{\|XX^*+Y^*Y\|}4 \right\} \\
&& +\frac1{4} \left| \frac{\|(1-i)X+(1+i)Y^*\|^2}4+\frac{\|(1+i)X-(1-i)Y^*\|^2}4 \right|.
\end{eqnarray*}

\end{theorem}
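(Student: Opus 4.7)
The plan is to specialize each of the four previously established results---Theorem~\ref{th11}, Corollary~\ref{pcor}, Corollary~\ref{pcor1}, and the two upper bounds in Theorem~\ref{corp1}---to the off-diagonal operator $T=\begin{pmatrix}0 & X\\ Y & 0\end{pmatrix}$. The proof is therefore purely computational: I need to identify $\|T\|$, $\|Re(T)\|$, $\|Im(T)\|$, $\|T^{\ast}T+TT^{\ast}\|$, and $\|Re(T)\pm Im(T)\|$ in terms of $X$ and $Y$, and substitute into the corresponding statements.

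First I would record the block identities
\[T^{\ast}=\begin{pmatrix}0 & Y^{\ast}\\ X^{\ast} & 0\end{pmatrix},\ \ Re(T)=\frac12\begin{pmatrix}0 & X+Y^{\ast}\\ X^{\ast}+Y & 0\end{pmatrix},\ \ Im(T)=\frac{1}{2i}\begin{pmatrix}0 & X-Y^{\ast}\\ Y-X^{\ast} & 0\end{pmatrix},\]
and observe that $T^{\ast}T+TT^{\ast}$ is block-diagonal with diagonal blocks $XX^{\ast}+Y^{\ast}Y$ and $X^{\ast}X+YY^{\ast}$. Using the elementary facts that $\left\|\begin{pmatrix}0 & A\\ B & 0\end{pmatrix}\right\|=\max\{\|A\|,\|B\|\}$, that a self-adjoint block-diagonal operator has norm equal to the maximum of the norms of its blocks, and that the two off-diagonal blocks of $Re(T)$ (respectively $Im(T)$) are mutual adjoints, I obtain
\[\|T\|=\max\{\|X\|,\|Y\|\},\quad \|Re(T)\|=\tfrac12\|X+Y^{\ast}\|,\quad \|Im(T)\|=\tfrac12\|X-Y^{\ast}\|,\]
\[\|T^{\ast}T+TT^{\ast}\|=\max\{\|XX^{\ast}+Y^{\ast}Y\|,\|X^{\ast}X+YY^{\ast}\|\}.\]
For parts (iii) and (v) a short expansion yields
\[Re(T)\pm Im(T)=\tfrac12\begin{pmatrix}0 & (1\mp i)X+(1\pm i)Y^{\ast}\\ (1\pm i)X^{\ast}+(1\mp i)Y & 0\end{pmatrix},\]
so $\|Re(T)\pm Im(T)\|=\tfrac12\|(1\mp i)X+(1\pm i)Y^{\ast}\|$.

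With these five substitutions in hand, the inequalities drop out by direct replacement: (i) from Theorem~\ref{th11}, (ii) from Corollary~\ref{pcor}, (iii) from Corollary~\ref{pcor1}, (iv) from the upper bound in Theorem~\ref{corp1}(i), and (v) from the upper bound in Theorem~\ref{corp1}(ii); in each case one simply replaces $\|T\|$, $\|Re(T)\|$, $\|Im(T)\|$, $\|T^{\ast}T+TT^{\ast}\|$ and (for (iii) and (v)) $\|Re(T)\pm Im(T)\|$ by the expressions computed above. Since the argument is nothing more than substitution into results already proved, I do not foresee any genuine obstacle; the only delicate point is the careful bookkeeping of the factors of $i$ in the expansion of $Re(T)\pm Im(T)$ and matching the resulting norm with the expression $\|(1\mp i)X+(1\pm i)Y^{\ast}\|$ used in the statement.
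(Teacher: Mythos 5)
Your strategy coincides exactly with the paper's own proof: the paper disposes of Theorem \ref{0000pp} in a single sentence, by substituting $T=\begin{pmatrix} 0 & X\\ Y & 0 \end{pmatrix}$ into Theorem \ref{th11}, Corollary \ref{pcor}, Corollary \ref{pcor1} and Theorem \ref{corp1}, and your block identities
\[ \|T\|=\max\{\|X\|,\|Y\|\},\quad \|Re(T)\|=\tfrac12\|X+Y^{*}\|,\quad \|Im(T)\|=\tfrac12\|X-Y^{*}\|, \]
together with $\|T^{*}T+TT^{*}\|=\max\{\|X^{*}X+YY^{*}\|,\|XX^{*}+Y^{*}Y\|\}$, are all correct; so parts (i), (ii) and (iv) are fully settled by your argument, in the same way the paper settles them.

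However, at precisely the point you flagged as ``the only delicate point,'' your formula does \emph{not} match the printed statement, and the discrepancy is substantive. Your (correct) expansion gives $\|Re(T)-Im(T)\|=\tfrac12\|(1+i)X+(1-i)Y^{*}\|$, with a \emph{plus} sign, whereas parts (iii) and (v) as printed contain $\|(1+i)X-(1-i)Y^{*}\|$; your closing claim that your expression is the one ``used in the statement'' is therefore false for the lower sign. These are genuinely different quantities. Moreover, the identity $(1+i)\bigl[(1-i)X+(1+i)Y^{*}\bigr]=2X+2iY^{*}=(1-i)\bigl[(1+i)X-(1-i)Y^{*}\bigr]$ shows that $\|(1-i)X+(1+i)Y^{*}\|=\|(1+i)X-(1-i)Y^{*}\|$ always, so in the printed (iii) the modulus-of-difference term is identically zero (the printed (iii) is a strictly weaker inequality, which one can check still follows from your plus-sign version), while the printed (v) is actually \emph{false}: taking $X=1$, $Y=-i$ on $\mathscr{H}=\mathbb{C}$ gives $w^{2}(T)=1$, yet both norms in (v) vanish and its right-hand side equals $\tfrac12$. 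Hence the minus signs in (iii) and (v) are misprints inherited from the substitution into Corollary \ref{pcor1} and Theorem \ref{corp1}(ii); what your substitution proves — and what the paper's one-line proof actually yields — are the plus-sign versions, and you should state this explicitly rather than assert agreement with the printed formulas. Note also that replacing $Y$ by $-Y$ and using the unitary equivalence $w\begin{pmatrix} 0&X\\ Y&0 \end{pmatrix}=w\begin{pmatrix} 0&X\\ -Y&0 \end{pmatrix}$ cannot rescue the printed mixed-sign form, since that substitution flips the sign of the $Y^{*}$ term in \emph{both} norms $\|Re(T)\pm Im(T)\|$ simultaneously.
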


\begin{remark}
	(i) We remark that the bound in Theorem \ref{0000pp} (i) is stronger than the same in \cite[Th. 2.7]{PKO}, namely,
	 \begin{eqnarray*}
			w\left( T\right) &\geq& \max \left\{\frac{\|X\|}2, \frac{\|Y\|}2 \right\}+\frac12\left| \frac{\|X+Y^*\|}2-\frac{\|X-Y^*\|}2\right|.
	\end{eqnarray*}
(ii) It is easy to verify that the bound in Theorem \ref{0000pp} (ii) is stronger than the same in \cite[Th. 2.12]{PKO}, namely,
\begin{eqnarray*}
w^2\left(T\right) &\geq&  \max \left\{\frac{\|X^*X+YY^*\|}4, \frac{\|XX^*+Y^*Y\|}4 \right\}
+\frac12 \left|  \frac{\|X+Y^*\|^2}{4}-\frac{\|X-Y^*\|^2}{4} \right|.
\end{eqnarray*}
\end{remark}

Now, by applying the operator matrix technique we develop upper bounds for the numerical radius of a bounded linear operator $T$ by using the $t$-Aluthge transform. First we give the following upper bound for the numerical radius $w\begin{pmatrix}
	0 & X \\
	Y & 0
\end{pmatrix}$, where $X,Y\in\mathbb{B}(\mathscr{H}). $

\begin{theorem}\label{lem22} \cite[Th. 2.5 and Cor. 2.6]{PSK}
	Let $X,Y\in\mathbb{B}(\mathscr{H})$ and $T=\begin{pmatrix}
		0 & X\\
		Y& 0
	\end{pmatrix}\in \mathbb{B}(\mathscr{H}\oplus \mathscr{H})$. If $  S=|X|^2+|Y^*|^2$ and  $P=|X^*|^2+|Y|^2$, then
$$w^2\begin{pmatrix}
	0 & X \\
	Y & 0
\end{pmatrix}=w^2(T) \leq  \sqrt{\min \{ \beta, \gamma  \} },$$
where $$\beta=\frac{1}{16} \|S\|^2+\frac14 w^2(YX)+\frac18 w(YXS+SYX),$$
$$ \gamma= \frac{1}{16} \|P\|^2+\frac14 w^2(XY)+\frac18 w(XYP+PXY).$$
\end{theorem}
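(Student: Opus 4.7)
My plan is to combine a fourth-power refinement of Kittaneh's numerical radius inequality with the block structure of the off-diagonal matrix $T$ and a unitary-swap symmetry.

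First, I would invoke (or derive, in the style of Bhunia-Paul) the scalar inequality
$$w^{4}(A)\le \frac{1}{16}\|A^{*}A+AA^{*}\|^{2}+\frac{1}{4}w^{2}(A^{2})+\frac{1}{8}\,w\bigl(A^{2}(A^{*}A+AA^{*})+(A^{*}A+AA^{*})A^{2}\bigr),$$
valid for every $A\in\mathbb{B}(\mathscr{H})$. This can be obtained at the vector level by bounding $|\langle A\xi,\xi\rangle|^{4}$ via the mixed Schwarz inequality $|\langle A\xi,\xi\rangle|^{2}\le\langle|A|\xi,\xi\rangle\langle|A^{*}|\xi,\xi\rangle$, the Hermite-Hadamard estimate \eqref{eq21}, and the convexity argument of Lemma~\ref{M}.

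Second, I would compute the block data for $T=\begin{pmatrix}0 & X\\ Y & 0\end{pmatrix}$: namely
$$T^{*}T+TT^{*}=\mathrm{diag}(P,S),\qquad T^{2}=\mathrm{diag}(XY,YX),$$
$$T^{2}(T^{*}T+TT^{*})+(T^{*}T+TT^{*})T^{2}=\mathrm{diag}(XYP+PXY,\,YXS+SYX).$$
Because each of these operators is block diagonal, its operator norm and numerical radius coincide with the maximum of the two corresponding block quantities.

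Third --- and this is the crux --- rather than plugging the block maxima naively into the general inequality (which loses information), I would re-run the vector-level proof with $\xi=(x,y)\in\mathscr{H}\oplus\mathscr{H}$, track how the cross pairings $\langle Xy,x\rangle$ and $\langle Yx,y\rangle$ couple, and use an appropriately weighted Cauchy-Schwarz to absorb the first-block data into the second-block quadratic form. This should yield $w^{4}(T)\le\beta$, involving only the data $(S,YX,YXS+SYX)$. The companion bound $w^{4}(T)\le\gamma$ then follows by applying the same argument to the unitarily equivalent operator $VTV^{*}=\begin{pmatrix}0 & Y\\ X & 0\end{pmatrix}$, where $V=\begin{pmatrix}0 & I\\ I & 0\end{pmatrix}$; this conjugation swaps $(X,Y)$ and hence $(S,YX,YXS+SYX)$ with $(P,XY,XYP+PXY)$, while $w(T)=w(VTV^{*})$. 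Combining gives $w^{4}(T)\le\min\{\beta,\gamma\}$, hence $w^{2}(T)\le\sqrt{\min\{\beta,\gamma\}}$ after taking square roots.

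\textbf{Main obstacle.} The subtle step is the third one: extracting a bound that depends on only one of the two blocks. The cross inner products introduced by the off-diagonal structure of $T$ genuinely mix the $x$- and $y$-data, so the Cauchy-Schwarz weighting must be chosen finely enough to cancel the unwanted first-block contributions without spoiling the coefficients $\tfrac{1}{16}$, $\tfrac14$, and $\tfrac18$ in $\beta$. This careful calibration is precisely the technical content of Theorem~2.5 of \cite{PSK}, on which the cited statement rests.
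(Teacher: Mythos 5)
There is a genuine gap, and it sits exactly where you flag it: your step three is a promissory note, not an argument. You write that the decisive calibration ``is precisely the technical content of Theorem 2.5 of \cite{PSK}'' --- but that theorem \emph{is} the statement to be proved (the present paper quotes it from \cite{PSK} without proof), so deferring to it is circular. Moreover, the pointwise mechanism you sketch is unlikely to close at all: to absorb the cross term at the vector level you would need an estimate of the shape $\langle S\xi,\xi\rangle\,\vert\langle B\xi,\xi\rangle\vert \le \tfrac12\, w(SB+BS)$ for positive $S$, and this is false in general (take $S$ and $B$ to be the two coordinate projections on $\mathbb{C}^2$: the right-hand side vanishes while the left-hand side does not). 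The anticommutator term in $\beta$ with coefficient $\tfrac18$ does not come from any pointwise Schwarz-type argument; for the same reason your step-one route to the general fourth-power inequality (mixed Schwarz plus Hermite--Hadamard plus Lemma \ref{M}) would produce bounds of the flavour of Theorem \ref{cor2}, not the anticommutator term, although the inequality itself happens to be true.

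The idea you are missing is that no cross-term cancellation is needed, because the proof runs at the norm level through $w(T)=\sup_{\theta\in\mathbb{R}}\|\mathrm{Re}(e^{i\theta}T)\|$. For the off-diagonal $T$ one has $\mathrm{Re}(e^{i\theta}T)=\frac12\begin{pmatrix} 0 & A_\theta\\ A_\theta^* & 0\end{pmatrix}$ with $A_\theta=e^{i\theta}X+e^{-i\theta}Y^*$, whence $w(T)=\frac12\sup_\theta\|A_\theta\|$ (the Hirzallah--Kittaneh--Shebrawi identity). Since $A_\theta^*A_\theta=S+2\,\mathrm{Re}(e^{2i\theta}YX)$, we get $16\,w^4(T)=\sup_\theta\bigl\|\bigl(S+2\,\mathrm{Re}(e^{2i\theta}YX)\bigr)^2\bigr\|$; expanding the square and using $\|\mathrm{Re}(e^{2i\theta}YX)\|\le w(YX)$ together with $S\,\mathrm{Re}(e^{2i\theta}YX)+\mathrm{Re}(e^{2i\theta}YX)\,S=\mathrm{Re}\bigl(e^{2i\theta}(SYX+YXS)\bigr)$ gives $16\,w^4(T)\le \|S\|^2+4w^2(YX)+2w(YXS+SYX)$, i.e.\ $w^4(T)\le\beta$. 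The companion bound $w^4(T)\le\gamma$ follows either from the $C^*$-identity $\|A_\theta^*A_\theta\|=\|A_\theta A_\theta^*\|$ with $A_\theta A_\theta^*=P+2\,\mathrm{Re}(e^{2i\theta}XY)$, or from your unitary-swap observation, which is correct; taking the minimum and square roots finishes. Your block computations in step two are right, as is your remark that naively feeding the block maxima into the general inequality loses information --- but the remedy is the $\sup_\theta$ identity and the equality of norms of the two diagonal blocks $A_\theta^*A_\theta$ and $A_\theta A_\theta^*$, not a finely weighted pointwise Cauchy--Schwarz.
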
	

For $X,Y\in\mathbb{B}(\mathscr{H})$, we have the following inequalities:
   \begin{eqnarray}\label{0p}
	w(XY)&\leq&w\begin{pmatrix}
		XY & 0 \\
		0 & YX
	\end{pmatrix}
	= w\left(\begin{pmatrix}
		0 & X \\
		Y & 0
	\end{pmatrix}^2\right)
	\leq w^2\begin{pmatrix}
		0 & X \\
		Y & 0
	\end{pmatrix}.
	\end{eqnarray}
Now, by using \eqref{0p} and Theorem \ref{lem22}, we prove the following result.
\begin{cor}\label{cor99}
	Let $T\in\mathbb{B}(\mathscr{H})$. If $P_t=|T|^{2(1-t)}+|T|^{2t},$ $0\leq t\leq 1,$  then
	\begin{eqnarray}\label{thh22}
		w(T) &\leq &\sqrt{\frac{1}{16}\left\|P_t \right\|^2+\frac14 w^2(\widetilde{T_t})+\frac18 w(\widetilde{T_t}P_t+P_t\widetilde{T_t})}\\
		&\leq&  \frac14\left\||T|^{2(1-t)}+|T|^{2t} \right\|+  \frac12 w(\widetilde{T_t}) \nonumber.
	\end{eqnarray}
	In particular, for $t=\frac12$
	\begin{eqnarray}\label{thh22i}
		w(T) &\leq& \sqrt{\frac{1}{4}\left\|T \right\|^2+\frac14 w^2(\widetilde{T})+\frac14 w(\widetilde{T}|T|+|T|\widetilde{T})} \\
		&\leq & \frac12 \|T\|+ \frac12 w(\widetilde{T}) \nonumber.
	\end{eqnarray}
\end{cor}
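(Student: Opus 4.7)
The plan is to apply Theorem~\ref{lem22} together with the chain \eqref{0p} to a factorization of $T$ tied to the $t$-Aluthge transform. From the polar decomposition $T=U|T|$, set $X=U|T|^{1-t}$ and $Y=|T|^{t}$. A direct calculation gives $XY=U|T|=T$ and $YX=|T|^{t}U|T|^{1-t}=\widetilde{T_t}$. Using the identity $U^{*}U|T|^{s}=|T|^{s}$ for $s>0$ (which holds because $U^{*}U$ is the projection onto the initial space of the partial isometry $U$, namely the closure of the range of $|T|$), we obtain $X^{*}X=|T|^{2(1-t)}$ and $YY^{*}=|T|^{2t}$, so that $X^{*}X+YY^{*}=P_{t}$.

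For the first inequality in \eqref{thh22}, the chain \eqref{0p} yields $w(T)=w(XY)\le w^{2}\!\begin{pmatrix}0&X\\Y&0\end{pmatrix}$. Applying Theorem~\ref{lem22} with $S=X^{*}X+YY^{*}=P_{t}$ and $YX=\widetilde{T_t}$, and keeping the $\beta$-bound, gives
\[
w(T)\le w^{2}\!\begin{pmatrix}0&X\\Y&0\end{pmatrix}\le\sqrt{\tfrac{1}{16}\|P_{t}\|^{2}+\tfrac{1}{4}w^{2}(\widetilde{T_t})+\tfrac{1}{8}w(\widetilde{T_t}P_{t}+P_{t}\widetilde{T_t})},
\]
which is precisely the first assertion in \eqref{thh22}.

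For the second inequality in \eqref{thh22}, squaring both sides and cancelling common terms reduces the claim to the anticommutator bound
\[
w(\widetilde{T_t}P_{t}+P_{t}\widetilde{T_t})\le 2\|P_{t}\|\,w(\widetilde{T_t}).
\]
I would establish this from the characterization $w(M)=\sup_{\theta\in\mathbb{R}}\|Re(e^{i\theta}M)\|$: since $P_{t}$ is self-adjoint,
\[
Re\bigl(e^{i\theta}(\widetilde{T_t}P_{t}+P_{t}\widetilde{T_t})\bigr)=P_{t}H_{\theta}+H_{\theta}P_{t},\qquad H_{\theta}:=Re(e^{i\theta}\widetilde{T_t}),
\]
and the operator-norm triangle inequality then gives $\|P_{t}H_{\theta}+H_{\theta}P_{t}\|\le 2\|P_{t}\|\|H_{\theta}\|\le 2\|P_{t}\|w(\widetilde{T_t})$. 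Taking the supremum over $\theta$ finishes the argument.

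The particular case $t=\tfrac{1}{2}$ in \eqref{thh22i} is obtained by substituting $P_{1/2}=2|T|$ and $\widetilde{T_{1/2}}=\widetilde{T}$: the factor $2$ in $P_{1/2}$ converts $\tfrac{1}{16}\|P_{1/2}\|^{2}$ into $\tfrac{1}{4}\|T\|^{2}$ and $\tfrac{1}{8}w(\widetilde{T_{1/2}}P_{1/2}+P_{1/2}\widetilde{T_{1/2}})$ into $\tfrac{1}{4}w(\widetilde{T}|T|+|T|\widetilde{T})$, matching the displayed inequalities. The step I expect to require the most care is the anticommutator bound; the rest is essentially bookkeeping with the polar-decomposition identities and the squaring reduction for the second bound.
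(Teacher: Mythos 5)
Your proposal is correct and takes essentially the same route as the paper: the same factorization $X=U|T|^{1-t}$, $Y=|T|^{t}$ giving $XY=T$, $YX=\widetilde{T_t}$, $X^*X+YY^*=P_t$, the same use of \eqref{0p} with the $\beta$-bound of Theorem \ref{lem22}, and the same squaring reduction for the second inequality. The only deviation is that where the paper simply cites the Fong--Holbrook inequality $w(XY+Y^{*}X)\leq 2\|Y\|w(X)$ from \cite{FJ}, you prove the required special case (self-adjoint $P_t$) directly via $w(M)=\sup_{\theta}\|Re(e^{i\theta}M)\|$, which is a valid, self-contained substitute for that citation.
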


\begin{proof}
	Taking $X=U|T|^{1-t}$ and $Y=|T|^t$ in Theorem \ref{lem22} (in the expression $\beta$) we obtain the inequality \eqref{thh22}, and the next inequality follows from the inequality (see \cite{FJ})   $w( XY+Y^*X) \leq 2\|Y\|w(X)$ for all $ X,Y\in\mathbb{B}(\mathscr{H})$. The rest of the inequalitie follows by considering $t=\frac12$.
\end{proof}

\begin{remark}
	(i) Let $T\in\mathbb{B}(\mathscr{H})$. Then, clearly the inequality \eqref{thh22} refines the  bound $w(T)\leq \frac14\left\||T|^{2(1-t)}+|T|^{2t} \right\|+  \frac12 w(\widetilde{T_t}),$ obtained by Kittaneh et al. \cite[Cor. 2.2]{FHM}.\\
	(ii) We would like to remark that the inequality \eqref{thh22i} is stronger than the inequality $w(T)\leq \frac12\left\|T \right\|+  \frac12 w(\widetilde{T}) \,\,\left(\leq \frac12 \|T\|+\frac12 \left\|T^2\right\|^{1/2}\right),$ proved by Yamazaki \cite[Th. 2.1]{STU07}. 
	
\end{remark}
Note that the inequality \eqref{thh22i} is already proved  in \cite[Th. 2.6]{psk1} but the  approach  is different and  simple.
Finally, we prove the following result.
\begin{cor}\label{th00}
	Let $T\in\mathbb{B}(\mathscr{H})$. If $Q_t=|T^*|^{2(1-t)}+|T|^{2t}$, $0\leq t\leq 1,$ then 
	\begin{eqnarray}\label{0099}
		w(T) &\leq & \sqrt{ \frac{1}{16}\left \|Q_t \right\|^2+\frac14 w^2(T)+\frac18 w(TQ_t+Q_tT)}\\
		&\leq& \frac14 \||T^*|^{2(1-t)}+|T|^{2t}\|+\frac12 w(T)\nonumber\\
		&\leq& \frac12 \left \||T^*|^{2(1-t)}+|T|^{2t} \right\|\nonumber.
	\end{eqnarray}
	\end{cor}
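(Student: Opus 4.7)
The plan is to mimic the proof of Corollary \ref{cor99} but to invoke the $\gamma$-branch of Theorem \ref{lem22} rather than the $\beta$-branch, with a different choice of the entries $X$ and $Y$. Write $T=U|T|$ for the polar decomposition, and set
\[
X=U|T|^{1-t}, \qquad Y=|T|^{t}.
\]
Then $XY=U|T|^{1-t}|T|^{t}=U|T|=T$, while
\[
|X^{*}|^{2}=XX^{*}=U|T|^{2(1-t)}U^{*}=|T^{*}|^{2(1-t)}, \qquad |Y|^{2}=|T|^{2t},
\]
so that $P:=|X^{*}|^{2}+|Y|^{2}=|T^{*}|^{2(1-t)}+|T|^{2t}=Q_{t}$.

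Next I would feed these into the chain \eqref{0p}, which gives
\[
w(T)=w(XY)\leq w^{2}\begin{pmatrix}0 & X \\ Y & 0\end{pmatrix},
\]
and combine it with the $\gamma$-estimate of Theorem \ref{lem22}, namely $w^{2}\begin{pmatrix}0 & X \\ Y & 0\end{pmatrix}\leq \sqrt{\gamma}$ with $\gamma=\tfrac{1}{16}\|P\|^{2}+\tfrac14 w^{2}(XY)+\tfrac18 w(XYP+PXY)$. Substituting $XY=T$ and $P=Q_{t}$ produces the first inequality of \eqref{0099}.

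For the second inequality I would appeal to the self-adjointness of $Q_{t}$ together with the Fong--Holbrook bound $w(XY+Y^{*}X)\leq 2\|Y\|w(X)$ cited in the proof of Corollary \ref{cor99}; applied to $X=T$, $Y=Q_{t}=Q_{t}^{*}$ it yields $w(TQ_{t}+Q_{t}T)\leq 2\|Q_{t}\|w(T)$. Plugging this into the first bound gives
\[
w(T)\leq \sqrt{\tfrac{1}{16}\|Q_{t}\|^{2}+\tfrac14 w^{2}(T)+\tfrac14\|Q_{t}\|w(T)} \;=\; \sqrt{\bigl(\tfrac14\|Q_{t}\|+\tfrac12 w(T)\bigr)^{2}} \;=\; \tfrac14\|Q_{t}\|+\tfrac12 w(T),
\]
which is exactly the second inequality. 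Finally, subtracting $\tfrac12 w(T)$ from both sides and doubling yields the third inequality $w(T)\leq \tfrac12\|Q_{t}\|$.

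There is no real obstacle here; the only point requiring a small check is the identity $XX^{*}=|T^{*}|^{2(1-t)}$, which follows from $UU^{*}$ acting as the identity on the range of $|T|^{1-t}$ (or equivalently from $|T^{*}|=U|T|U^{*}$ and functional calculus). Everything else is direct substitution into Theorem \ref{lem22}, inequality \eqref{0p}, and the Fong--Holbrook estimate.
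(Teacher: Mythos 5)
Your proposal is correct and takes essentially the same approach as the paper's proof: the identical choice $X=U|T|^{1-t}$, $Y=|T|^{t}$ fed into the $\gamma$-expression of Theorem \ref{lem22} via \eqref{0p}, followed by the Fong--Holbrook bound $w(XY+Y^*X)\leq 2\|Y\|w(X)$ and the trivial rearrangement for the last inequality. Your explicit verifications (that $XX^*=|T^*|^{2(1-t)}$ and that the bound collapses to the perfect square $\left(\tfrac14\|Q_t\|+\tfrac12 w(T)\right)^2$) only spell out details the paper leaves implicit.
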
 
\begin{proof}
	Taking $X=U|T|^{1-t}$, $Y=|T|^t$ in Theorem \ref{lem22} (in the expression $\gamma$) we obtain the inequality \eqref{0099}.
	The second inequality follows from $w( XY+Y^*X) \leq 2\|Y\|w(X)$ for all $ X,Y \in\mathbb{B}(\mathscr{H})$ (see \cite{FJ}). The last inequality follows trivially.
\end{proof}

\begin{remark}
	Let $T\in\mathbb{B}(\mathscr{H})$. Then,
clearly the bound in \eqref{0099} is sharper than the bound $ w(T)\leq \frac14 \left\||T^*|^{2(1-t)}+|T|^{2t} \right\|+\frac12 w(T),$ proved by Kittaneh et al. \cite{FHM}. 
\end{remark}


\bibliographystyle{amsplain}

\end{document}